\newtheorem{Assumption}{Assumption}
\newtheorem{Thm}{Theorem}
\newtheorem{Lem}{Lemma}
\newtheorem{Cor}{Corollary}
\newtheorem{Rem}{Remark}
\begin{document}
% The file aaai.sty is the style file for AAAI Press 
% proceedings, working notes, and technical reports.
%
\title{Parallel Restarted SGD with Faster Convergence and Less Communication: Demystifying Why Model Averaging Works for Deep Learning}
\author{Hao~Yu, Sen~Yang, Shenghuo~Zhu\\
Machine Intelligence Technology Team, Alibaba Group (U.S.) Inc., Bellevue, WA.\\
}
\maketitle

\begin{abstract}
\noindent  In distributed training of deep neural networks, parallel mini-batch SGD is widely used to speed up the training process by using multiple workers. It uses multiple workers to sample local stochastic gradient in parallel, aggregates all gradients in a single server to obtain the average, and update each worker's local model using a SGD update with the averaged gradient.  Ideally, parallel mini-batch SGD can achieve a linear speed-up of the training time (with respect to the number of workers) compared with SGD over a single worker. However, such linear scalability in practice is significantly limited by the growing demand for gradient communication as more workers are involved.  Model averaging, which {\bf periodically} averages individual models trained over parallel workers, is another common practice used for distributed training of deep neural networks since \cite{Zinkevich10NIPS} \cite{McDonald10NACCL}.  Compared with parallel mini-batch SGD, the communication overhead of model averaging is significantly reduced. Impressively, tremendous experimental works have verified that model averaging can still achieve a good speed-up of the training time as long as the averaging interval is carefully controlled. However, it remains a mystery in theory why such a simple heuristic works so well. This paper provides a thorough and rigorous theoretical study on why model averaging can work as well as parallel mini-batch SGD with significantly less communication overhead.

\end{abstract}

\section{Introduction}

Consider the distributed training of deep neural networks over multiple workers  \cite{Dean12NIPS}, where all workers can access all or partial training data and aim to find a common model that yields the minimum training loss.  Such a scenario can be modeled as the following distributed parallel non-convex optimization
\begin{align}
\min_{ \mathbf{x}\in \mathbb{R}^{m}}  \quad f(\mathbf{x})  \overset{\Delta}{=} \frac{1}{N}\sum_{i=1}^N f_{i}(\mathbf{x}) \label{eq:problem-form}
\end{align}
where $N$ is the number of nodes/workers and each $ f_{i}(\mathbf{x}) \overset{\Delta}{=}\mathbb{E}_{\zeta_{i}\sim \mathcal{D}_{i}} [ F_{i}(\mathbf{x}; \zeta_{i})]$ is a smooth non-convex function where $\mathcal{D}_i$ can be possibly different for different $i$.  Following the standard  stochastic optimization setting, this paper assumes each worker can locally observe unbiased independent stochastic gradients (around the last iteration solution $\mathbf{x}_{i}^{t-1}$) given by $\mathbf{G}_{i}^{t} = \nabla F_{i}(\mathbf{x}^{t-1}_{i}; \zeta_{i}^{t})$ with $\mathbb{E}_{\zeta_{i}^{t}\sim \mathcal{D}_{i}}[\mathbf{G}_{i}^{t} \vert \boldsymbol{\zeta}^{[t-1]}] = \nabla f_{i}(\mathbf{x}^{t-1}_{i}), \forall i$ where $\boldsymbol{\zeta}^{[t-1]} \overset{\Delta}{=} [\zeta_{i}^{\tau}]_{i\in\{1,2,\ldots,N\}, \tau\in\{1,\ldots,t-1\}}$ denotes all the randomness up to iteration $t-1$.

One classical parallel method to solve problem \eqref{eq:problem-form} is to sample each worker's local stochastic gradient in parallel, aggregate all gradients in a single server to obtain the average, and update each worker's local solution using the averaged gradient in its SGD step\footnote{Equivalently, we can let the server update its solution using the averaged gradient and broadcast this solution to all local workers. Another equivalent implementation is to let each worker take a single SGD step using its own gradient and send the updated local solution to the server; let the server calculate the average of all workers' updated solutions and refresh each worker's local solution with the averaged version.} \cite{Dekel12JMLR} \cite{Li14NIPS}.  Such a classical method, called {\bf parallel mini-batch SGD} in this paper, is conceptually equivalent to a single node Stochastic Gradient Descent (SGD) with a batch size $N$ times large and achieves $O(1/\sqrt{NT})$ convergence with a linear speed-up with respect to (w.r.t.) the number of workers \cite{Dekel12JMLR}.  Since every iteration of parallel mini-batch SGD requires exchanging of local gradient information among all workers, the corresponding communication cost is quite heavy and often becomes the performance bottleneck. 

There have been many attempts to reduce communication overhead in parallel mini-batch SGD.  One notable method called {\bf decentralized parallel SGD (D-PSGD)} is studied in \cite{Lian17NIPS}\cite{Jiang17NIPS} \cite{Lian18ICML}. Remarkably, D-PSGD can achieve the same $O(1/\sqrt{NT})$ convergence rate as parallel mini-batch SGD, i.e., the  linear speed-up w.r.t. the number of workers is preserved, without requiring a single server to collect stochastic gradient information from local workers. However, since D-PSGD requires each worker to exchange their local solutions/gradients with its neighbors at {\bf every} iteration, the total number of communication rounds in D-PSGD is the same as that in parallel mini-batch SGD.  Another notable method to reduce communication overhead in parallel mini-batch SGD is to let each worker use compressed gradients rather than raw gradients for communication. For example, {\bf quantized SGD} studied in \cite{Seide14Interspeech}\cite{Alistarh17NIPS}\cite{Wen17NIPS} or {\bf sparsified SGD} studied in \cite{Strom15Interspeech}\cite{Dryden16MLHPC}\cite{Aji17EMNLP} allow each worker to pass low bit quantized or sparsified gradients to the server at {\bf every} iteration by sacrificing the convergence to a mild extent. Similarly to D-PSGD, such gradient compression based methods require message passing at every iteration and hence their total number of communication rounds is still the same as that in parallel mini-batch SGD. 

Recall that parallel mini-batch SGD can be equivalently interpreted as a procedure where at each iteration each local worker first takes a single SGD step and then replaces its own solution by the average of individual solutions.  With a motivation to reduce the number of inter-node communication rounds, a lot of works suggest to reduce the frequency of averaging individual solutions in parallel mini-batch SGD.  Such a method is known as  {\bf model averaging} and has been widely used in practical training of deep neural networks. Model averaging can at least date back to \cite{Zinkevich10NIPS} \cite{McDonald10NACCL} where  individual models are averaged only at the last iteration before which all workers simply run SGD in parallel.  The method in \cite{Zinkevich10NIPS} \cite{McDonald10NACCL}, referred to as {\bf one-shot averaging}, uses only one single communication step at the end and is numerically shown to have good solution quality in many applications. However, it is unclear whether the one-shot averaging can preserve the linear speed-up w.r.t. the number of workers. In fact, \cite{Zhang16ArXiv} shows that one-shot averaging can yield inaccurate solutions for certain non-convex optimization. As a remedy, \cite{Zhang16ArXiv} suggests more frequent averaging should be used to improve the performance.  However,  the understanding on how averaging frequency can affect the performance of parallel SGD is quite limited in the current literature.  Work \cite{Zhou17ArXiv} proves that by averaging local worker solutions only every $I$ iterations, parallel SGD has convergence rate $O(\sqrt{I}/\sqrt{NT})$ for non-convex optimization.\footnote{In this paper, we shall show that if $I$ is chosen as $I = O(T^{1/4}/N^{3/4})$, parallel SGD for non-convex optimization does not lose any factor in its convergence rate.} That is, the convergence slows down by a factor of $I$ by saving $I$ times inter-node communication. A recent exciting result reported in \cite{Stich18ArXiv} proves that for strongly-convex minimization, model averaging can achieve a linear speed-up w.r.t. $N$ as long as the averaging (communication) step is performed once at least every $I = O(\sqrt{T}/\sqrt{N})$ iterations. Work \cite{Stich18ArXiv} provides the first theoretical analysis that demonstrates the possibility of achieving the same linear speedup attained by parallel mini-batch SGD with strictly less communication for {\bf strongly-convex} stochastic optimization.  However, it remains as an open question in \cite{Stich18ArXiv} whether it is possible to achieve $O(1/\sqrt{NT})$ convergence for non-convex optimization, which is the case of deep learning.  

On the other hand, many experimental works \cite{Povey15ICLR} \cite{Chen16ICASSP} \cite{McMahan17AISTATS} \cite{Su18ArXiv} \cite{Kamp18ArXiv} \cite{Lin18ArXiv} observe that model averaging can achieve a superior performance for various deep learning applications. One may be curious whether these positive experimental results are merely coincidences for special case examples or can be attained universally. In this paper, we shall show that model averaging indeed can achieve $O(1/\sqrt{NT})$ convergence for non-convex optimization by averaging only every $I = O(T^{1/4}/N^{3/4})$ iterations. That is, the same $O(1/\sqrt{NT})$ convergence is preserved for non-convex optimization while communication overhead is saved by a factor of $O(T^{1/4}/N^{3/4})$. To our knowledge, this paper is the first\footnote{After the preprint \cite{Yu18ArXiv} of this paper is posted on ArXiv in July 2018, another work \cite{WangJoshi18ArXiv} subsequently analyzes the convergence rate of model averaging for non-convex optimization. Their independent analysis relaxes our bounded second moment assumption but further assumes all $f_i(\mathbf{x})$ in formulation \eqref{eq:problem-form} are identical, i.e, all workers must access a common training set when training deep neural networks.} to present provable convergence rate guarantees (with the linear speed-up w.r.t. number of workers and less communication) of model averaging for non-convex optimization such as deep learning and provide guidelines on how often averaging is needed without losing the linear speed-up.

Besides reducing the communication cost, the method of model averaging also has the advantage of reducing privacy and security risks in the {\bf federated learning} scenario recently proposed by Google in \cite{McMahan17AISTATS}. This is because model averaging only passes deep learning models, which are shown to preserve good differential privacy, and does not  pass raw data or gradients owned by each individual worker.

\section{Parallel Restarted SGD and Its Performance Analysis}

Throughout this paper, we assume problem \eqref{eq:problem-form} satisfies the following assumption.

\begin{Assumption}\label{ass:basic}~
\begin{enumerate}
\item {\bf Smoothness: } Each function $f_{i}(\mathbf{x})$ is smooth with modulus $L$.
\item {\bf Bounded variances and second moments:} There exits constants $\sigma > 0$ and $G>0$ such that 
\begin{align*}
\mathbb{E}_{\zeta_{i}\sim \mathcal{D}_{i}} \Vert \nabla F_{i}(\mathbf{x}; \zeta_{i}) - \nabla f_{i}(\mathbf{x})\Vert^{2} \leq \sigma^{2}, \forall \mathbf{x}, \forall i
\end{align*}
\begin{align*}
\mathbb{E}_{\zeta_{i}\sim \mathcal{D}_{i}} \Vert \nabla F_{i}(\mathbf{x}; \zeta_{i}) \Vert^{2} \leq G^{2}, \forall \mathbf{x}, \forall i
\end{align*}
\end{enumerate}
\end{Assumption}

\begin{algorithm}
\caption{Parallel Restarted SGD (PR-SGD)}\label{alg:parallel-sgd}
\begin{algorithmic}[1]
\State {\bf Input:}  Initialize $\mathbf{x}_i^0 = \overline{\mathbf{y}} \in \mathbb{R}^m$. Set learning rate $\gamma > 0$ and node synchronization interval (integer) $I>0$ 
\For{$t=1$~\text{to}~$T$}
\State  Each node $i$ observes an unbiased stochastic gradient $\mathbf{G}_{i}^{t}$ of $f_{i}(\cdot)$ at point $\mathbf{x}_{i}^{t-1}$
\If {$t$\text{~is a multiple of~}$I$, i.e., $t~\text{mod}~I = 0$,} 
\State Calculate node average $\overline{\mathbf{y}} \overset{\Delta}{=}\frac{1}{N}\sum_{i=1}^{N} \mathbf{x}_{i}^{t-1}$
\State Each node $i$ in parallel updates its local solution 
\begin{align}
\mathbf{x}_{i}^{t} =\overline{\mathbf{y}} - \gamma \mathbf{G}_{i}^{t},\quad \forall i \label{eq:parallel-sgd-x-update-use-y}
\end{align}
\Else
\State Each node $i$ in parallel updates its local solution 
\begin{align}
\mathbf{x}_{i}^{t} = \mathbf{x}_{i}^{t-1} - \gamma \mathbf{G}_{i}^{t}, \quad \forall i \label{eq:parallel-sgd-x-update}
\end{align}
\EndIf
\EndFor
\end{algorithmic}
\end{algorithm}

Consider the simple parallel SGD described in Algorithm \ref{alg:parallel-sgd}.  If we divide iteration indices into epochs of length $I$, then in each epochs all $N$ workers are running SGD in parallel with the same initial point $\overline{\mathbf{y}}$ that is the average of final individual solutions from the previous epoch. This is why we call Algorithm \ref{alg:parallel-sgd} ``{\bf Parallel Restarted SGD}''.  The ``model averaging" technique used as a common  practice for training deep neural networks can be viewed as a special case since Algorithm \ref{alg:parallel-sgd} calculates the model average to obtain $\overline{\mathbf{y}}$ every $I$ iterations and performs local SGDs at each worker otherwise.  Such an algorithm is different from elastic averaging SGD (EASGD) proposed in \cite{Zhang15NIPS} which periodically drags each local solution towards their average using a controlled weight. Note that synchronization (of iterations) across $N$ workers is not necessary inside each epoch of Algorithm \ref{alg:parallel-sgd}.  Furthermore, inter-node communication is only needed to calculate the initial point at the beginning of each epoch and is longer needed inside each epoch. As a consequence, Algorithm \ref{alg:parallel-sgd} with $I>1$ reduces its number of communication rounds by a factor of $I$ when compared with the classical parallel mini-batch SGD.  The linear speed-up property (w.r.t. number of workers) with $I>1$ is recently proven only for strongly convex optimization in \cite{Stich18ArXiv}. However, there is no theoretical guarantee on whether the linear speed-up with $I>1$ can be preserved for non-convex optimization, which is the case of deep neural networks.

Fix iteration index $t$, we define 
\begin{align}
\overline{\mathbf{x}}^{t} \overset{\Delta}{=}  \frac{1}{N}\sum_{i=1}^{N} \mathbf{x}_{i}^{t} \label{eq:node-average-x}
\end{align}
as the average of local solution $\mathbf{x}_{i}^{t}$ over all $N$ nodes.  It is immediate that 
\begin{align}
\overline{\mathbf{x}}^{t} = \overline{\mathbf{x}}^{t-1} - \gamma \frac{1}{N}\sum_{i=1}^{N} \mathbf{G}_{i}^{t} \label{eq:overline-x-update}
\end{align}
Inspired by earlier works on distributed stochastic optimization \cite{Zhang12NIPS} \cite{Lian17NIPS} \cite{Mania17SIOPT} \cite{Stich18ArXiv} where convergence analysis is performed for an aggregated version of individual solutions, this paper focuses on the convergence rate analysis of $\overline{\mathbf{x}}^t$ defined in \eqref{eq:node-average-x}.  An interesting observation from \eqref{eq:overline-x-update} is: Workers in Algorithm \ref{alg:parallel-sgd} run their local SGD independently for most iterations,  however, they still jointly update their node average using a dynamic similar to SGD.The main issue in \eqref{eq:overline-x-update} is an ``inaccurate" stochastic gradient, which is a simple average of individual stochastic gradients at points different from $\overline{\mathbf{x}}^t$, is used. Since each worker in Algorithm \ref{alg:parallel-sgd} periodically restarts its SGD with the same initial point, deviations between each local solution $\mathbf{x}_{i}^{t}$ and $\overline{\mathbf{x}}^t$ are expected to be controlled by selecting a proper synchronization interval $I$. The following useful lemma relates quantity  $\mathbb{E} [\Vert \overline{\mathbf{x}}^{t} - \mathbf{x}^{t}_{i}\Vert^{2}]$ and algorithm parameter $I$. A similar lemma is proven in \cite{Stich18ArXiv}.

\begin{Lem} \label{lm:diff-avg-per-node}
Under Assumption \ref{ass:basic}, Algorithm \ref{alg:parallel-sgd} ensures
\begin{align*}
\mathbb{E} [\Vert \overline{\mathbf{x}}^{t} - \mathbf{x}^{t}_{i}\Vert^{2}] \leq 4\gamma^{2} I^{2} G^{2}, \forall i, \forall t
\end{align*}
where $\overline{\mathbf{x}}^{t}$ is defined in \eqref{eq:node-average-x} and $G$ is the constant defined in Assumption \ref{ass:basic}.
\end{Lem}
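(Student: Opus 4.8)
The plan is to exploit the \textbf{restart} structure of Algorithm~\ref{alg:parallel-sgd}: inside any block of $I$ consecutive iterations every worker evolves by plain local SGD from a \emph{common} initial point, so the deviation $\overline{\mathbf{x}}^t-\mathbf{x}_i^t$ is merely an accumulation of at most $I$ stochastic-gradient differences. First I would fix $t$ and set $t_0 = \lfloor t/I\rfloor\cdot I$, the most recent restart iteration not exceeding $t$; the common base point is $\overline{\mathbf{x}}^{t_0-1}$ when $t_0\geq 1$ and is $\overline{\mathbf{y}}$ in the very first epoch $t_0=0$. By the update rules \eqref{eq:parallel-sgd-x-update-use-y} and \eqref{eq:parallel-sgd-x-update}, at iteration $t_0$ all workers are reset to that common point and thereafter run \eqref{eq:parallel-sgd-x-update} independently, so unrolling the recursion gives
\begin{align*}
\mathbf{x}_i^t = \overline{\mathbf{x}}^{t_0-1} - \gamma\sum_{s=t_0}^{t}\mathbf{G}_i^s .
\end{align*}
Averaging this identity over $i$ shows $\overline{\mathbf{x}}^t$ shares exactly the same base point, which therefore cancels in the difference, leaving
\begin{align*}
\overline{\mathbf{x}}^t - \mathbf{x}_i^t = \gamma\sum_{s=t_0}^{t}\Big(\mathbf{G}_i^s - \tfrac{1}{N}\sum_{j=1}^N\mathbf{G}_j^s\Big).
\end{align*}
The crucial count is that the number of summands is $t-t_0+1\leq I$, because $t_0$ is a multiple of $I$ and $t<t_0+I$ by definition of $t_0$.

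Next I would bound the squared norm. Writing the sum as having $n\leq I$ terms and applying the Cauchy--Schwarz inequality $\Vert\sum_{s}\mathbf{v}_s\Vert^2\leq n\sum_s\Vert\mathbf{v}_s\Vert^2$, then taking expectations, reduces the task to bounding the single term $\mathbb{E}\Vert\mathbf{G}_i^s-\frac1N\sum_j\mathbf{G}_j^s\Vert^2$. Rewriting $\mathbf{G}_i^s-\frac1N\sum_j\mathbf{G}_j^s=\frac1N\sum_j(\mathbf{G}_i^s-\mathbf{G}_j^s)$, Jensen's inequality together with $\Vert\mathbf{a}-\mathbf{b}\Vert^2\leq 2\Vert\mathbf{a}\Vert^2+2\Vert\mathbf{b}\Vert^2$ and the bounded second-moment assumption $\mathbb{E}\Vert\mathbf{G}_j^s\Vert^2\leq G^2$ from Assumption~\ref{ass:basic} yields a per-term bound of $4G^2$. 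Combining the two estimates gives $\mathbb{E}\Vert\overline{\mathbf{x}}^t-\mathbf{x}_i^t\Vert^2\leq \gamma^2\, n\cdot n\cdot 4G^2\leq 4\gamma^2 I^2G^2$, which is exactly the claim. (An equivalent route is to bound each pairwise gap $\mathbf{x}_i^t-\mathbf{x}_j^t$, whose base cancels even more transparently, and then use $\overline{\mathbf{x}}^t-\mathbf{x}_i^t=\frac1N\sum_j(\mathbf{x}_j^t-\mathbf{x}_i^t)$ with Jensen.)

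I expect the only genuine obstacle to be the bookkeeping in the first step: correctly identifying the common base point and verifying that the gradient count never exceeds $I$ across all three situations---a generic mid-epoch $t$, the restart iterations $t=kI$ themselves (where the count is just $1$), and the initial epoch starting from $\overline{\mathbf{y}}$ (where the unrolled sum runs from $s=1$ and the count is at most $I-1$). Once the telescoped identity for $\overline{\mathbf{x}}^t-\mathbf{x}_i^t$ is established with the correct term count, the remaining estimates are routine applications of Cauchy--Schwarz, Jensen, and the second-moment bound; note in particular that the variance constant $\sigma^2$ is never needed here, consistent with the lemma's statement involving only $G$.
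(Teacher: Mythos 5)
Your proof is correct and takes essentially the same route as the paper's: unroll both $\mathbf{x}_i^t$ and $\overline{\mathbf{x}}^t$ back to the common restart point, note that their difference is $\gamma$ times a sum of at most $I$ gradient discrepancies, and bound it with $\Vert \sum_{s}\mathbf{v}_s\Vert^2 \leq n\sum_{s}\Vert \mathbf{v}_s\Vert^2$ plus the second-moment bound $\mathbb{E}\Vert \mathbf{G}_i^s\Vert^2 \leq G^2$, arriving at the same constant $4$. The only differences are cosmetic: your choice of $t_0$ as the restart iteration itself (versus the paper's averaging index) and your order of applying the norm inequalities (bounding each per-step difference $\mathbf{G}_i^s - \frac{1}{N}\sum_j \mathbf{G}_j^s$ by $4G^2$ first, rather than splitting the two accumulated sums as in the paper).
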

\begin{proof}
Fix $t \geq 1$ and $i\in\{1,2,\ldots, N\}$. Note that Algorithm \ref{alg:parallel-sgd} calculates the node average $\overline{\mathbf{y}} \overset{\Delta}{=} \frac{1}{N} \sum_{i=1}^{N} \mathbf{x}_{i}^{t-1}$ every $I$ iterations. Consider the largest $t_{0} \leq  t$ such that $\overline{\mathbf{y}}= \overline{\mathbf{x}}^{t_{0}}$ at iteration $t_{0}$ in Algorithm \ref{alg:parallel-sgd}. (Note that such $t_{0}$ must exist and $t - t_{0} \leq I$.)  We further note, from the update equations \eqref{eq:parallel-sgd-x-update-use-y} and \eqref{eq:parallel-sgd-x-update} in Algorithm \ref{alg:parallel-sgd}, that
\begin{align}
\mathbf{x}_{i}^{t}=  \overline{\mathbf{y}}  -\gamma \sum_{\tau = t_{0}+1}^{t}\mathbf{G}_{i}^{\tau} \label{eq:pf-diff-avg-per-node-eq1}
\end{align}
By \eqref{eq:overline-x-update}, we have
\begin{align*}
\overline{\mathbf{x}}^{t}  = \overline{\mathbf{y}} -\gamma \sum_{\tau = t_{0}+1}^{t} \frac{1}{N}\sum_{i=1}^{N}\mathbf{G}_{i}^{\tau}
\end{align*}

Thus, we have
{\small
\begin{align*}
&\mathbb{E} [\Vert \mathbf{x}^{t}_{i} - \overline{\mathbf{x}}^{t} \Vert^{2}] \\=&  \mathbb{E} [\Vert \gamma \sum_{\tau = t_{0}+1}^{t} \frac{1}{N}\sum_{i=1}^{N}\mathbf{G}_{i}^{\tau} -\gamma \sum_{\tau = t_{0}+1}^{t}\mathbf{G}_{i}^{\tau} \Vert^{2}]\\
=& \gamma^{2} \mathbb{E}  [\Vert \sum_{\tau = t_{0}+1}^{t} \frac{1}{N}\sum_{i=1}^{N}\mathbf{G}_{i}^{\tau} -\sum_{\tau = t_{0}+1}^{t}\mathbf{G}_{i}^{\tau}\Vert^{2}]\\
\overset{(a)}{\leq}& 2\gamma^{2} \mathbb{E} [\Vert \sum_{\tau = t_{0}+1}^{t} \frac{1}{N}\sum_{i=1}^{N}\mathbf{G}_{i}^{\tau}\Vert^{2} + \Vert \sum_{\tau = t_{0}+1}^{t}\mathbf{G}_{i}^{\tau}\Vert^{2} ]\\
\overset{(b)}{\leq}& 2\gamma^{2} (t-t_{0}) \mathbb{E} [ \sum_{\tau = t_{0}+1}^{t}\Vert  \frac{1}{N}\sum_{i=1}^{N}\mathbf{G}_{i}^{\tau}\Vert^{2} + \sum_{\tau = t_{0}+1}^{t}\Vert \mathbf{G}_{i}^{\tau}\Vert^{2}]\\
\overset{(c)}{\leq}& 2\gamma^{2} (t-t_{0}) \mathbb{E} [\sum_{\tau = t_{0}+1}^{t} (\frac{1}{N}\sum_{i=1}^{N}\Vert \mathbf{G}_{i}^{\tau}\Vert^{2}) +\sum_{\tau = t_{0}+1}^{t}\Vert \mathbf{G}_{i}^{\tau}\Vert^{2}]\\
 \overset{(d)}{\leq} & 4\gamma^{2} I^{2} G^{2}
\end{align*}
}%
where (a)-(c) follows by using the inequality $\Vert \sum_{i=1}^{n} \mathbf{z}_{i}\Vert^{2} \leq n \sum_{i=1}^{n} \Vert \mathbf{z}_{i}\Vert^{2}$ for any vectors $\mathbf{z}_{i}$ and any positive integer $n$ (using $n=2$ in (a), $n=t-t_0$ in (b) and $n=N$ in (c)); and (d) follows from Assumption \ref{ass:basic}.
\end{proof}

\begin{Thm} \label{thm:rate} 
Consider problem \eqref{eq:problem-form} under Assumption \ref{ass:basic}. If $0 < \gamma \leq \frac{1}{L}$ in Algorithm \ref{alg:parallel-sgd}, then for all $T\geq 1$, we have {\scriptsize $$\frac{1}{T} \sum_{t=1}^{T} \mathbb{E}[\Vert \nabla f(\overline{\mathbf{x}}^{t-1})\Vert^{2}] \leq \frac{2}{\gamma T} (f(\overline{\mathbf{x}}^{0}) -f^{\ast}) + 4 \gamma^{2} I^{2} G^{2} L^2 + \frac{L}{N}\gamma \sigma^{2}$$}%
where $f^{\ast}$ is the minimum value of problem \eqref{eq:problem-form}. 
\end{Thm}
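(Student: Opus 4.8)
The plan is to run the standard non-convex SGD descent argument on the averaged iterate $\overline{\mathbf{x}}^t$, whose dynamics $\overline{\mathbf{x}}^{t} = \overline{\mathbf{x}}^{t-1} - \gamma \overline{\mathbf{G}}^{t}$ with $\overline{\mathbf{G}}^t \overset{\Delta}{=} \frac{1}{N}\sum_{i=1}^N \mathbf{G}_i^t$ already resemble SGD on $f$, except that each $\mathbf{G}_i^t$ is sampled at the local point $\mathbf{x}_i^{t-1}$ rather than at $\overline{\mathbf{x}}^{t-1}$. First I would invoke $L$-smoothness of $f$ to write the descent inequality $f(\overline{\mathbf{x}}^t) \leq f(\overline{\mathbf{x}}^{t-1}) - \gamma \langle \nabla f(\overline{\mathbf{x}}^{t-1}), \overline{\mathbf{G}}^t\rangle + \frac{L\gamma^2}{2}\Vert\overline{\mathbf{G}}^t\Vert^2$ and take the conditional expectation given all randomness $\boldsymbol{\zeta}^{[t-1]}$ up to iteration $t-1$. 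Using unbiasedness, the conditional mean of $\overline{\mathbf{G}}^t$ is $\mathbf{h}^t \overset{\Delta}{=} \frac{1}{N}\sum_{i=1}^N \nabla f_i(\mathbf{x}_i^{t-1})$, so the cross term becomes $-\gamma\langle \nabla f(\overline{\mathbf{x}}^{t-1}), \mathbf{h}^t\rangle$.

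The crux is controlling the mismatch between $\nabla f(\overline{\mathbf{x}}^{t-1}) = \frac{1}{N}\sum_{i=1}^N \nabla f_i(\overline{\mathbf{x}}^{t-1})$ and $\mathbf{h}^t$. I would apply the polarization identity $\langle a,b\rangle = \frac{1}{2}(\Vert a\Vert^2 + \Vert b\Vert^2 - \Vert a-b\Vert^2)$ to the cross term; this exposes a clean $-\frac{\gamma}{2}\Vert\nabla f(\overline{\mathbf{x}}^{t-1})\Vert^2$ (the quantity we want to bound), a favorable $-\frac{\gamma}{2}\Vert\mathbf{h}^t\Vert^2$, and a deviation term $\frac{\gamma}{2}\Vert\nabla f(\overline{\mathbf{x}}^{t-1}) - \mathbf{h}^t\Vert^2$. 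The deviation term is handled by Jensen's inequality and $L$-smoothness: $\Vert\nabla f(\overline{\mathbf{x}}^{t-1}) - \mathbf{h}^t\Vert^2 \leq \frac{L^2}{N}\sum_{i=1}^N \Vert\overline{\mathbf{x}}^{t-1} - \mathbf{x}_i^{t-1}\Vert^2$, and Lemma \ref{lm:diff-avg-per-node} bounds the expectation of each summand by $4\gamma^2 I^2 G^2$, yielding the $4\gamma^2 I^2 G^2 L^2$ contribution. This is the one place where the averaging interval $I$ enters, and I expect it to be the main obstacle: everything hinges on having already established Lemma \ref{lm:diff-avg-per-node} so that the per-node drift caused by skipping communication stays bounded.

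For the quadratic term I would split $\mathbf{G}_i^t = \nabla f_i(\mathbf{x}_i^{t-1}) + \boldsymbol{\xi}_i^t$ into its conditional mean and a zero-mean noise $\boldsymbol{\xi}_i^t$ with $\mathbb{E}[\Vert\boldsymbol{\xi}_i^t\Vert^2]\leq\sigma^2$. Because the noises are independent across workers and conditionally zero-mean, the cross terms vanish and $\mathbb{E}[\Vert\overline{\mathbf{G}}^t\Vert^2 \mid \boldsymbol{\zeta}^{[t-1]}] = \Vert\mathbf{h}^t\Vert^2 + \frac{1}{N^2}\sum_{i=1}^N \mathbb{E}[\Vert\boldsymbol{\xi}_i^t\Vert^2] \leq \Vert\mathbf{h}^t\Vert^2 + \frac{\sigma^2}{N}$; this $1/N$ variance reduction is precisely what produces the $\frac{L}{N}\gamma\sigma^2$ term and the linear speed-up. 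Combining the pieces, the $\Vert\mathbf{h}^t\Vert^2$ contributions collect into $-\frac{\gamma}{2}(1 - L\gamma)\Vert\mathbf{h}^t\Vert^2$, which is $\leq 0$ thanks to the step-size condition $\gamma \leq 1/L$ and can simply be dropped. Finally I would take total expectation, rearrange to isolate $\frac{\gamma}{2}\mathbb{E}[\Vert\nabla f(\overline{\mathbf{x}}^{t-1})\Vert^2]$, sum over $t=1,\dots,T$ so that the $f(\overline{\mathbf{x}}^{t})$ terms telescope, lower-bound $\mathbb{E}[f(\overline{\mathbf{x}}^T)]$ by $f^{\ast}$, and divide by $\gamma T/2$ to obtain the claimed bound.
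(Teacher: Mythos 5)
Your proposal is correct and follows essentially the same route as the paper's own proof: smoothness applied to the averaged iterate, the polarization identity on the cross term, the bias--variance split with cross-worker independence giving the $\sigma^2/N$ term, Lemma \ref{lm:diff-avg-per-node} plus $L$-smoothness of each $f_i$ giving the $4\gamma^2 I^2 G^2 L^2$ term, dropping the nonpositive $-\frac{\gamma}{2}(1-L\gamma)\Vert \mathbf{h}^t\Vert^2$ term via $\gamma\leq 1/L$, and telescoping. The only cosmetic difference is that you condition on $\boldsymbol{\zeta}^{[t-1]}$ and introduce explicit noise variables $\boldsymbol{\xi}_i^t$, whereas the paper invokes the identity $\mathbb{E}[\Vert \mathbf{Z}\Vert^2]=\mathbb{E}[\Vert \mathbf{Z}-\mathbb{E}[\mathbf{Z}]\Vert^2]+\Vert\mathbb{E}[\mathbf{Z}]\Vert^2$ and the iterated law of expectations directly.
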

\begin{proof}
Fix $t\geq 1$. By the smoothness of $f$ , we have
{\small 
\begin{align}
\mathbb{E}[f(\overline{\mathbf{x}}^{t})]\leq &\mathbb{E}[f(\overline{\mathbf{x}}^{t-1})] +  \mathbb{E}[\langle \nabla f(\overline{\mathbf{x}}^{t-1}), \overline{\mathbf{x}}^{t} - \overline{\mathbf{x}}^{t-1}\rangle]  \nonumber \\&~+ \frac{L}{2} \mathbb{E}[\Vert \overline{\mathbf{x}}^{t} - \overline{\mathbf{x}}^{t-1}\Vert^{2}] \label{eq:pf-thm-rate-eq1}
\end{align}
}%

Note that 
{\footnotesize
\begin{align}
 &\mathbb{E}[\Vert \overline{\mathbf{x}}^{t} - \overline{\mathbf{x}}^{t-1}\Vert^{2}] \overset{(a)}{=} \gamma^{2} \mathbb{E}[ \Vert  \frac{1}{N}\sum_{i=1}^{N} \mathbf{G}_{i}^{t}\Vert^{2}] \nonumber\\ 
 \overset{(b)}{=}& \gamma^{2} \mathbb{E}[ \Vert  \frac{1}{N}\sum_{i=1}^{N} ( \mathbf{G}_{i}^{t} - \nabla f_{i}(\mathbf{x}_{i}^{t-1})) \Vert^{2}]  + \gamma^{2} \mathbb{E} [ \Vert \frac{1}{N} \sum_{i=1}^{N} \nabla f_{i}(\mathbf{x}_{i}^{t-1})\Vert^{2}] \nonumber\\
\overset{(c)}{=}& \gamma^{2} \frac{1}{N^{2}}\sum_{i=1}^{N} \mathbb{E}[\Vert  \mathbf{G}_{i}^{t} - \nabla f_{i}(\mathbf{x}_{i}^{t-1})\Vert^{2}] + \gamma^{2} \mathbb{E} [ \Vert \frac{1}{N} \sum_{i=1}^{N} \nabla f_{i}(\mathbf{x}_{i}^{t-1})\Vert^{2}] \nonumber\\
\overset{(d)}{\leq }& \frac{1}{N} \gamma^{2} \sigma^{2}  +  \gamma^{2} \mathbb{E} [ \Vert \frac{1}{N} \sum_{i=1}^{N} \nabla f_{i}(\mathbf{x}_{i}^{t-1})\Vert^{2}] \label{eq:pf-thm-rate-eq2}
\end{align}}%
where (a) follows from \eqref{eq:overline-x-update}; (b) follows by noting that $\mathbb{E}[\mathbf{G}_{i}^{t}] = \nabla f_{i}(\mathbf{x}_{i}^{t-1})$ and applying the basic inequality $\mathbb{E}[\Vert \mathbf{Z} \Vert^{2}] = \mathbb{E} [ \Vert \mathbf{\mathbf{Z}} - \mathbb{E}[\mathbf{Z}]\Vert^{2}] + \Vert\mathbb{E}[\mathbf{Z}] \Vert^{2}$ that holds for any random vector $\mathbf{Z}$; (c) follows because each $\mathbf{G}_{i}^{t} - \nabla f_{i}(\mathbf{x}_{i}^{t-1})$ has $\mathbf{0}$ mean and is independent across nodes; and (d) follows from Assumption \ref{ass:basic}. 

We further note that 
{\small
\begin{align}
&\mathbb{E}[\langle \nabla f(\overline{\mathbf{x}}^{t-1}), \overline{\mathbf{x}}^{t} - \overline{\mathbf{x}}^{t-1}\rangle] \nonumber\\
\overset{(a)}{=}& -\gamma \mathbb{E} [\langle \nabla f(\overline{\mathbf{x}}^{t-1}), \frac{1}{N} \sum_{i=1}^{N} \mathbf{G}_{i}^{t}\rangle] \nonumber \\
\overset{(b)}{=}& -\gamma \mathbb{E} [\langle \nabla f(\overline{\mathbf{x}}^{t-1}), \frac{1}{N} \sum_{i=1}^{N} \nabla f_{i} (\mathbf{x}_{i}^{t-1})\rangle ] \nonumber \\
\overset{(c)}{=}& - \frac{\gamma}{2} \mathbb{E} \big[ \Vert \nabla f(\overline{\mathbf{x}}^{t-1})\Vert^{2} + \Vert \frac{1}{N} \sum_{i=1}^{N} \nabla f_{i} (\mathbf{x}_{i}^{t-1})\Vert^{2}  \nonumber\\ &\qquad\quad- \Vert \nabla f(\overline{\mathbf{x}}^{t-1}) - \frac{1}{N} \sum_{i=1}^{N} \nabla f_{i} (\mathbf{x}_{i}^{t-1})\Vert^{2}\big] \label{eq:pf-thm-rate-eq3}
\end{align}}%
where (a) follows from \eqref{eq:overline-x-update}; (b) follows because 
{\footnotesize
	\begin{align*}
	&\mathbb{E}[\langle \nabla f(\overline{\mathbf{x}}^{t-1}), \frac{1}{N} \sum_{i=1}^{N} \mathbf{G}_{i}^{t}\rangle] \\
	=& \mathbb{E}[\mathbb{E}[\langle \nabla f(\overline{\mathbf{x}}^{t-1}), \frac{1}{N} \sum_{i=1}^{N} \mathbf{G}_{i}^{t}\rangle | \boldsymbol{\zeta}^{[t-1]}]] \\
	=& \mathbb{E}[\langle \nabla f(\overline{\mathbf{x}}^{t-1}), \frac{1}{N} \sum_{i=1}^{N} \mathbb{E}[\mathbf{G}_{i}^{t}| \boldsymbol{\zeta}^{[t-1]}]\rangle ]\\
	 =& \mathbb{E}[\langle \nabla f(\overline{\mathbf{x}}^{t-1}), \frac{1}{N} \sum_{i=1}^{N} \nabla f_{i}(\mathbf{x}_{i}^{t-1})\rangle ]
	\end{align*}
}%
%$\mathbb{E}[\langle \nabla f(\overline{\mathbf{x}}^{t-1}), \frac{1}{N} \sum_{i=1}^{N} \mathbf{G}_{i}^{t}\rangle] 
%= \mathbb{E}[\mathbb{E}[\langle \nabla f(\overline{\mathbf{x}}^{t-1}), \frac{1}{N} \sum_{i=1}^{N} \mathbf{G}_{i}^{t}\rangle | \boldsymbol{\zeta}^{[t-1]}]] = \mathbb{E}[\langle \nabla f(\overline{\mathbf{x}}^{t-1}), \frac{1}{N} \sum_{i=1}^{N} \mathbb{E}[\mathbf{G}_{i}^{t}| \boldsymbol{\zeta}^{[t-1]}]\rangle ] = \mathbb{E}[\langle \nabla f(\overline{\mathbf{x}}^{t-1}), \frac{1}{N} \sum_{i=1}^{N} \nabla f_{i}(\mathbf{x}_{i}^{t-1})\rangle ]$
where the first equality follows by the iterated law of expectations, the second equality follows because $\overline{\mathbf{x}}^{t-1}$ is determined by $\boldsymbol{\zeta}^{[t-1]}= [\boldsymbol{\zeta}^{1}, \ldots, \boldsymbol{\zeta}^{t-1}]$ and the third equality follows by $\mathbb{E}[\mathbf{G}_{i}^{t} | \boldsymbol{\zeta}^{[t-1]}] = \mathbb{E}[\nabla F_{i}(\mathbf{x}_{i}^{t-1};\zeta^{t}_{i}) | \boldsymbol{\zeta}^{[t-1]}] = \nabla f_{i}(\mathbf{x}_{i}^{t-1})$; and (c) follows from the basic identity $\langle \mathbf{z}_{1}, \mathbf{z}_{2}\rangle = \frac{1}{2} \big( \Vert \mathbf{z}_{1}\Vert^{2} + \Vert \mathbf{z}_{2}\Vert^{2} - \Vert \mathbf{z}_{1} - \mathbf{z}_{2}\Vert^{2} \big)$ for any two vectors $\mathbf{z}_{1}, \mathbf{z}_{2}$ of the same length.

Substituting \eqref{eq:pf-thm-rate-eq2} and \eqref{eq:pf-thm-rate-eq3} into \eqref{eq:pf-thm-rate-eq1} yields %{\small $\mathbb{E}[f(\overline{\mathbf{x}}^{t})] \leq \mathbb{E}[f(\overline{\mathbf{x}}^{t-1})] - \frac{\gamma - \gamma^{2}L}{2} \mathbb{E} [\Vert \frac{1}{N} \sum_{i=1}^{N} \nabla f_{i} (\mathbf{x}_{i}^{t-1})\Vert^{2}] - \frac{\gamma}{2} \mathbb{E}[\Vert \nabla f(\overline{\mathbf{x}}^{t-1})\Vert^{2}]  + \frac{\gamma}{2}  \mathbb{E}[
%\Vert \nabla f(\overline{\mathbf{x}}^{t-1}) - \frac{1}{N} \sum_{i=1}^{N} \nabla f_{i} (\mathbf{x}_{i}^{t-1}) \Vert^{2} ] + \frac{L}{2N} \gamma^{2} \sigma^{2} \overset{(a)}{\leq} \mathbb{E}[f(\overline{\mathbf{x}}^{t-1})]  - \frac{\gamma - \gamma^{2}L}{2} \mathbb{E}[\Vert \frac{1}{N} \sum_{i=1}^{N} \nabla f_{i} (\mathbf{x}_{i}^{t-1})\Vert^{2}] - \frac{\gamma}{2} \mathbb{E} [\Vert \nabla f(\overline{\mathbf{x}}^{t-1})\Vert^{2}] + 2\gamma^{3} I^{2} G^{2} L^2 + \frac{L}{2N} \gamma^{2} \sigma^{2}$}
{\footnotesize \begin{align}
&\mathbb{E}[f(\overline{\mathbf{x}}^{t})] \nonumber\\
\leq &\mathbb{E}[f(\overline{\mathbf{x}}^{t-1})] - \frac{\gamma - \gamma^{2}L}{2} \mathbb{E} [\Vert \frac{1}{N} \sum_{i=1}^{N} \nabla f_{i} (\mathbf{x}_{i}^{t-1})\Vert^{2}] \nonumber \\
 &- \frac{\gamma}{2} \mathbb{E}[\Vert \nabla f(\overline{\mathbf{x}}^{t-1})\Vert^{2}]  \nonumber\\ &+ \frac{\gamma}{2}  \mathbb{E}[
\Vert \nabla f(\overline{\mathbf{x}}^{t-1}) - \frac{1}{N} \sum_{i=1}^{N} \nabla f_{i} (\mathbf{x}_{i}^{t-1}) \Vert^{2} ] + \frac{L}{2N} \gamma^{2} \sigma^{2} \nonumber\\
\overset{(a)}{\leq}& \mathbb{E}[f(\overline{\mathbf{x}}^{t-1})]  - \frac{\gamma - \gamma^{2}L}{2} \mathbb{E}[\Vert \frac{1}{N} \sum_{i=1}^{N} \nabla f_{i} (\mathbf{x}_{i}^{t-1})\Vert^{2}]  \nonumber\\ &- \frac{\gamma}{2} \mathbb{E} [\Vert \nabla f(\overline{\mathbf{x}}^{t-1})\Vert^{2}] + 2\gamma^{3} I^{2} G^{2} L^2 + \frac{L}{2N} \gamma^{2} \sigma^{2} \label{eq:pf-thm-rate-eq6}\\
\overset{(b)}{\leq} &\mathbb{E}[f(\overline{\mathbf{x}}^{t-1})]  - \frac{\gamma}{2} \mathbb{E}[\Vert \nabla f(\overline{\mathbf{x}}^{t-1})\Vert^{2}] + 2\gamma^{3} I^{2} G^{2} L^2 + \frac{L}{2N} \gamma^{2} \sigma^{2} \label{eq:pf-thm-rate-eq7}
\end{align}
}%
where (b) follows from $0 < \gamma \leq \frac{1}{L}$ and (a) follows because 
{\small \begin{align}
&\mathbb{E}[ \Vert \nabla f(\overline{\mathbf{x}}^{t-1}) - \frac{1}{N} \sum_{i=1}^{N} \nabla f_{i} (\mathbf{x}_{i}^{t-1})\Vert^{2}] \nonumber \\
 =& \mathbb{E} [ \Vert \frac{1}{N} \sum_{i=1}^{N}\nabla f_{i}(\overline{\mathbf{x}}^{t-1}) - \frac{1}{N} \sum_{i=1}^{N} \nabla f_{i} (\mathbf{x}_{i}^{t-1})\Vert^{2}] \nonumber \\
=& \frac{1}{N^{2}} \mathbb{E} [\Vert \sum_{i=1}^{N} \big( \nabla f_{i}(\overline{\mathbf{x}}^{t-1}) - \nabla f_{i} (\mathbf{x}_{i}^{t-1}) \big)\Vert^{2}] \nonumber \\
\leq& \frac{1}{N} \mathbb{E} [ \sum_{i=1}^{N} \Vert \nabla f_{i}(\overline{\mathbf{x}}^{t-1}) - \nabla f_{i} (\mathbf{x}_{i}^{t-1})\Vert ^{2}] \nonumber \\
\leq& L^2\frac{1}{N} \sum_{i=1}^{N}\mathbb{E}[ \Vert \overline{\mathbf{x}}^{t-1} - \mathbf{x}_{i}^{t-1}\Vert^{2}] \nonumber \\
\leq& 4 \gamma^{2} I^{2} G^{2} L^2 \nonumber
\end{align}
}%
where the first inequality follows by using $\Vert \sum_{i=1}^{N} \mathbf{z}_{i}\Vert^{2} \leq N \sum_{i=1}^{N} \Vert \mathbf{z}_{i}\Vert^{2}$ for any vectors $\mathbf{z}_{i}$; the second inequality follows from the smoothness of each $f_{i}$ by Assumption \ref{ass:basic}; and the third inequality follows from Lemma \ref{lm:diff-avg-per-node}.

Dividing \eqref{eq:pf-thm-rate-eq7} both sides by $\frac{\gamma}{2}$ and rearranging terms yields
{\small \begin{align}
&\mathbb{E}\left [\Vert \nabla f(\overline{\mathbf{x}}^{t-1})\Vert^{2}\right] \nonumber \\
\leq& \frac{2}{\gamma} \left(\mathbb{E}\left[f(\overline{\mathbf{x}}^{t-1})\right]  - \mathbb{E}\left[f(\overline{\mathbf{x}}^{t})\right]\right) +4 \gamma^{2} I^{2} G^{2} L^2 + \frac{L}{N}\gamma \sigma^{2} \label{eq:pf-thm-rate-eq8}
\end{align}
}%
Summing over $t\in\{1,\ldots, T\}$ and dividing both sides by $T$ yields
\begin{align*}
&\frac{1}{T} \sum_{t=1}^{T} \mathbb{E}\left [\Vert \nabla f(\overline{\mathbf{x}}^{t-1})\Vert^{2}\right] \\
\leq &\frac{2}{\gamma T} \left(f(\overline{\mathbf{x}}^{0}) - \mathbb{E}\left[f(\overline{\mathbf{x}}^{T})\right]\right) +  4\gamma^{2} I^{2} G^{2} L^2 + \frac{L}{N}\gamma \sigma^{2} \\
\overset{(a)}{\leq} &  \frac{2}{\gamma T} \left(f(\overline{\mathbf{x}}^{0}) -f^{\ast}\right) +  4\gamma^{2} I^{2} G^{2} L^2 + \frac{L}{N}\gamma \sigma^{2}
\end{align*}
where (a) follows because $f^{\ast}$ is the minimum value of problem \eqref{eq:problem-form}.
\end{proof}

The next corollary follows by substituting suitable $\gamma, I$ values into Theorem \ref{thm:rate}.
\begin{Cor}\label{cor:rate}
Consider problem \eqref{eq:problem-form} under Assumption \ref{ass:basic}. Let $T\geq N$. 
\begin{enumerate}
\item If we choose $\gamma =\frac{\sqrt{N}}{L\sqrt{T}}$ in Algorithm \ref{alg:parallel-sgd}, then we have $\frac{1}{T} \sum_{t=1}^{T} \mathbb{E}[\Vert \nabla f(\overline{\mathbf{x}}^{t-1})\Vert^{2}] \leq  \frac{2L}{\sqrt{NT}} \left(f(\overline{\mathbf{x}}^{0}) -f^{\ast}\right) +  \frac{4N}{T}I^{2}G^{2} + \frac{1}{\sqrt{NT}}\sigma^{2}$.
\item If we further choose $I \leq \frac{T^{1/4}}{N^{3/4}}$, then  $\frac{1}{T} \sum_{t=1}^{T} \mathbb{E}[\Vert \nabla f(\overline{\mathbf{x}}^{t-1})\Vert^{2}] \leq \frac{2L}{\sqrt{NT}} (f(\overline{\mathbf{x}}^{0}) -f^{\ast}) +  \frac{4}{\sqrt{NT}} G^{2} + \frac{1}{\sqrt{NT}}\sigma^{2} = O(\frac{1}{\sqrt{NT}}) $
where $f^{\ast}$ is the minimum value of problem \eqref{eq:problem-form}. 
\end{enumerate}
\end{Cor}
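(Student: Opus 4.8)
The plan is to obtain both parts as an immediate consequence of Theorem~\ref{thm:rate} by substituting the prescribed parameter values into its bound and simplifying. Theorem~\ref{thm:rate} already delivers a bound on $\frac{1}{T}\sum_{t=1}^{T}\mathbb{E}[\Vert\nabla f(\overline{\mathbf{x}}^{t-1})\Vert^{2}]$ that splits into three terms---an optimization term $\frac{2}{\gamma T}(f(\overline{\mathbf{x}}^{0})-f^{\ast})$, a deviation term $4\gamma^{2}I^{2}G^{2}L^{2}$, and a variance term $\frac{L}{N}\gamma\sigma^{2}$---valid for every admissible step size, so no new analysis is required and the work is purely algebraic.

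Before substituting, I would first confirm the admissibility hypothesis $0 < \gamma \le 1/L$ of Theorem~\ref{thm:rate} for the choice $\gamma = \frac{\sqrt{N}}{L\sqrt{T}}$. This is precisely where the standing assumption $T \ge N$ is used: it gives $\sqrt{N}/\sqrt{T} \le 1$ and hence $\gamma \le 1/L$, so the theorem applies. With feasibility secured, part~1 follows by plugging $\gamma = \frac{\sqrt{N}}{L\sqrt{T}}$ into each term: the optimization term $\frac{2}{\gamma T}$ collapses to $\frac{2L}{\sqrt{NT}}$; the deviation term $4\gamma^{2}I^{2}G^{2}L^{2}$ reduces to $\frac{4N}{T}I^{2}G^{2}$ once the factor $L^{2}$ cancels against $\gamma^{2}=N/(L^{2}T)$; and the variance term $\frac{L}{N}\gamma\sigma^{2}$ simplifies to $\frac{1}{\sqrt{NT}}\sigma^{2}$. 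Collecting these gives the claimed bound.

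For part~2, the only $I$-dependent quantity is the middle term $\frac{4N}{T}I^{2}G^{2}$. Substituting $I \le T^{1/4}/N^{3/4}$, so that $I^{2} \le \sqrt{T}/N^{3/2}$, bounds it by $\frac{4N}{T}\cdot\frac{\sqrt{T}}{N^{3/2}}G^{2} = \frac{4}{\sqrt{NT}}G^{2}$. All three terms are then of order $1/\sqrt{NT}$, which yields the $O(1/\sqrt{NT})$ rate.

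I do not expect a genuine obstacle here: the statement is essentially a substitution lemma, and the only step one could trip on is the step-size feasibility check, which quietly consumes the $T \ge N$ hypothesis. The conceptually important content is concentrated in the middle term, since it is exactly the constraint $I \le T^{1/4}/N^{3/4}$ that prevents the communication-reduction term from dominating; I would therefore verify that exponent computation most carefully, as it is the one place where a slip would alter the headline conclusion that the averaging interval can grow this fast without degrading the rate.
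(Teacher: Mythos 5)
Your proposal is correct and matches the paper's approach exactly: the paper also obtains Corollary~\ref{cor:rate} by direct substitution of $\gamma = \frac{\sqrt{N}}{L\sqrt{T}}$ and $I \leq \frac{T^{1/4}}{N^{3/4}}$ into the bound of Theorem~\ref{thm:rate}, and your algebra for all three terms is accurate. Your explicit verification that $T \geq N$ guarantees $\gamma \leq \frac{1}{L}$ is a detail the paper leaves implicit, so nothing is missing.
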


\begin{Rem}
For non-convex optimization, it is generally impossible to develop a convergence rate for objective values.  In Theorem \ref{thm:rate} and Corollary \ref{cor:rate}, we follow the convention in literature \cite{GhadimiLan13SIOPT} \cite{Lian17NIPS} \cite{Alistarh17NIPS} to use the (average) expected squared gradient norm to characterize the convergence rate. Note that the average can be attained in expectation by taking each $\overline{\mathbf{x}}^{t-1}$ with an equal probability $1/T$.
\end{Rem}

From Theorem \ref{thm:rate} and Corollary \ref{cor:rate}, we have the following important observations:

\begin{itemize}
\item {\bf Linear Speedup:}  By part (1) of  Corollary \ref{cor:rate}, Algorithm \ref{alg:parallel-sgd} with any fixed constant $I$ has convergence rate $O(\frac{1}{\sqrt{NT}} + \frac{N}{T})$. If $T$ is large enough, i.e., $T > N^{3}$, then the term $\frac{N}{T}$ is dominated by  the term $\frac{1}{\sqrt{NT}} $ and hence Algorithm \ref{alg:parallel-sgd} has convergence rate $O(\frac{1}{\sqrt{NT}})$. That is,  our algorithm achieves a linear speed-up with respect to the number of workers.  Such linear speedup for stochastic non-convex optimization was previously attained by decentralized-parallel stochastic gradient descent (D-PSGD) considered in \cite{Lian17NIPS} by requiring at least $T > N^{5}$. See, e.g.,  Corollary 2 in \cite{Lian17NIPS}.\footnote{In fact, for a ring network considered in Theorem 3 in \cite{Lian17NIPS}, D-PSGD requires a even larger $T$ satisfying $T > N^9$ since its implementation depends on the network topology. In contrast, the linear speedup of our algorithm is irrelevant to the network topology.}

\item {\bf Communication Reduction:} Note that Algorithm \ref{alg:parallel-sgd} requires inter-node communication only at the iterations that are multiples of $I$. By Corollary \ref{alg:parallel-sgd}, it suffices to choose any $I \leq \frac{T^{1/4}}{N^{3/4}}$ to ensure the $O(\frac{1}{\sqrt{NT}})$ convergence of our algorithm. That is, compared with parallel mini-batch SGD or the D-PSGD in \cite{Lian17NIPS}, the number of communication rounds in our algorithm can be reduced by a factor $\frac{T^{1/4}}{N^{3/4}}$.   Although Algorithm \ref{alg:parallel-sgd} does not describe how the node average $\overline{\mathbf{y}}$ is obtained at each node, in practice, the simplest way is to introduce a parameter server that collects all local solutions and broadcasts their average as in parallel mini-batch SGD \cite{Li14NIPS}. Alternatively, we can perform an {\bf all-reduce} operation on the local models(without introducing a server) such that all nodes obtain $\overline{\mathbf{y}}$ independently and simultaneously. (Using an all-reduce operation among all nodes to obtain gradients averages has been previously suggested in \cite{Goyal17ArXiv} for distributed training of deep learning.)

%Alternatively, we can use at most $O(N)$ rounds of communication to collect $\overline{\mathbf{y}} = \frac{1}{N} \sum_{i=1}^{N} \mathbf{x}_{i}^{t-1}$ at all nodes through a line network protocol (or even fewer communication rounds via a more connected network topology.) Since such  $\overline{\mathbf{y}}$ is needed only every $I$ iterations, as long as the number of these additional communication rounds is less than $I$, our algorithm implementation can avoid a possible communication jam with less communication overhead than D-PSGD.

%\item {\bf Implementation Easiness:} The implementation of our Algorithm \ref{alg:parallel-sgd} is very convenient and has minor changes over any existing SGD frameworks. Each node simplify restarted its local SGD procedure using the node average as the new starting point every $I$ iterations. By Theorem \ref{thm:rate} and Corollary \ref{cor:rate}, to ensure the $O(1/\sqrt{NT})$ convergence rate of our algorithm, we can simply choose $\gamma = \frac{\sqrt{N}}{\sqrt{T}}$ and $I \leq \frac{T^{1/4}}{N^{3/4}}$ without requiring any other knowledge, e.g., smooth modulus $L$. In contrast, D-PGSD in \cite{Lian17NIPS} requires the knowledge of smooth modulus $L$  to achieve the linear speedup (with respect to $N$) for non-convex stochastic optimization.
\end{itemize}

\section{Extensions}

\subsection{Using Time-Varying Learning Rates}
Note that Corollary \ref{cor:rate} assumes time horizon $T$ is known and uses a constant learning rate in Algorithm \ref{alg:parallel-sgd}.
In this subsection, we consider the scenario where the time horizon $T$ is not known beforehand and develop a variant of Algorithm \ref{alg:parallel-sgd} with time-varying rates to achieve the same computation and communication complexity. Compared with Algorithm \ref{alg:parallel-sgd}, Algorithm \ref{alg:parallel-sgd-time-varying} has the advantage that its accuracy is being improved automatically as it runs longer.

\begin{algorithm}
\caption{PR-SGD with Time-Varying Learning Rates }\label{alg:parallel-sgd-time-varying}
\begin{algorithmic}[1]
\State {\bf Input:}  Set time-varying epoch learning rates $\gamma^{s} > 0$. 
\State {\bf Initialize:} Initialize $\mathbf{x}_i^{0,K^{0}}  = \overline{\mathbf{x}}^{0} \in \mathbb{R}^m$.
\For{epoch index $s=1$~\text{to}~$S$}
\State  Set epoch length $K^s$ and initialize $\mathbf{x}_{i}^{s,0} = \frac{1}{N}\sum_{i=1}^{N} \mathbf{x}_{i}^{s-1,K^{s-1}}$ to be the node average of local worker solutions from the last epoch.
\For{$k=1$~\text{to}~$K^{s}$}
\State Each node $i$ observes an unbiased gradient $\mathbf{G}_{i}^{s,k}$ of $f_{i}(\cdot)$ at point $\mathbf{x}_{i}^{s,k-1}$ and in parallel updates
\begin{align}
\mathbf{x}_{i}^{s,k} =\mathbf{x}_{i}^{s,k-1} - \gamma^{s} \mathbf{G}_{i}^{s,k},\quad \forall i \label{eq:parallel-sgd-time-varying-x-update}
\end{align}
\EndFor
\EndFor
\end{algorithmic}
\end{algorithm}

Although Algorithm \ref{alg:parallel-sgd-time-varying} introduces the concept of epoch for the convenience of description,  we note that it is nothing but a parallel restarted SGD where each worker restarts itself every epoch using the node average of the last epoch's final solutions as the initial point. If we sequentially reindex $\{\mathbf{x}_{i}^{s,k}\}_{s\in\{1,\ldots,S\}, k\in\{1,\ldots, K^{s}\}}$ as $\mathbf{x}_{i}^{t}$ (note that all $\mathbf{x}_{i}^{s,0}$ are ignored since $\mathbf{x}_{i}^{s,0} = \mathbf{x}_{i}^{s-1,K^{s-1}}$), then Algorithm \ref{alg:parallel-sgd-time-varying} is mathematically equivalent to Algorithm \ref{alg:parallel-sgd} except that time-varying learning rates $\gamma^{s}$ are used in different epochs. Similarly to \eqref{eq:node-average-x}, we can define $\overline{\mathbf{x}}^{s,k}$ via $\overline{\mathbf{x}}^{s,k} \overset{\Delta}{=}  \frac{1}{N}\sum_{i=1}^{N} \mathbf{x}_{i}^{s,k}$ and have
{\small 
\begin{align}
\overline{\mathbf{x}}^{s,k} = \overline{\mathbf{x}}^{s,k-1} - \gamma \frac{1}{N}\sum_{i=1}^{N} \mathbf{G}_{i}^{s,k} 
\end{align}
}

\begin{Thm}\label{thm:rate-time-varying}
Consider problem \eqref{eq:problem-form} under Assumption \ref{ass:basic}. If we choose $K^{s} = \lceil \frac{s^{1/3}}{N}\rceil$ and $\gamma^s = \frac{N}{s^{2/3}}$ in Algorithm \ref{alg:parallel-sgd-time-varying}, then for all $S\geq1$, we have\footnote{A logarithm factor $\log(NT)$ is hidden in the notation $\widetilde{O}(\cdot)$.}
{\small 
\begin{align*}
\frac{1}{\sum_{s=1}^{S}\sum_{k=1}^{K^{s}}{\gamma^{s}}}  \sum_{s=1}^{S} \sum_{k=1}^{K^{s}} \mathbb{E}\left [ \gamma^s \Vert \nabla f(\overline{\mathbf{x}}^{s, k-1})\Vert^{2}\right] \leq \widetilde{O}(\frac{1}{\sqrt{NT}})
\end{align*}
}%
where $T = \sum_{s=1}^{S} K^{s}$.
\end{Thm}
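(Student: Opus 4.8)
The plan is to replay the proof of Theorem \ref{thm:rate} epoch by epoch, with the epoch-dependent rate $\gamma^s$ playing the role of $\gamma$ and the epoch length $K^s$ playing the role of the synchronization interval $I$, and then to perform the final averaging with the nonuniform weights $\gamma^s$ instead of uniformly. The crucial structural fact that makes the telescoping work is that the node average is continuous across epoch boundaries: since $\mathbf{x}_i^{s,0} = \frac{1}{N}\sum_{j=1}^N \mathbf{x}_j^{s-1,K^{s-1}}$, we get $\overline{\mathbf{x}}^{s,0} = \overline{\mathbf{x}}^{s-1,K^{s-1}}$, so once a one-step descent inequality is established for every within-epoch update, the objective values will telescope cleanly over the whole run down to $f(\overline{\mathbf{x}}^0) - f^\ast$.

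First I would prove the epoch-local analogue of Lemma \ref{lm:diff-avg-per-node}. Inside epoch $s$ the rate $\gamma^s$ is constant and each worker has been restarted from the common point $\mathbf{x}_i^{s,0}$ at most $K^s$ steps earlier, so the identical argument (writing $\mathbf{x}_i^{s,k}$ and $\overline{\mathbf{x}}^{s,k}$ as the common restart point minus $\gamma^s$ times a partial sum of at most $K^s$ stochastic gradients, then applying $\Vert\sum_i\mathbf{z}_i\Vert^2 \le n\sum_i\Vert\mathbf{z}_i\Vert^2$ and the second-moment bound) yields $\mathbb{E}[\Vert \overline{\mathbf{x}}^{s,k} - \mathbf{x}_i^{s,k}\Vert^2] \le 4(\gamma^s)^2 (K^s)^2 G^2$. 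Feeding this into the smoothness-based descent computation of Theorem \ref{thm:rate} gives, for each within-epoch step,
\begin{align*}
\mathbb{E}[f(\overline{\mathbf{x}}^{s,k})] &\le \mathbb{E}[f(\overline{\mathbf{x}}^{s,k-1})] - \tfrac{\gamma^s}{2}\mathbb{E}[\Vert \nabla f(\overline{\mathbf{x}}^{s,k-1})\Vert^2] \\ &\quad + 2(\gamma^s)^3 (K^s)^2 G^2 L^2 + \tfrac{L}{2N}(\gamma^s)^2 \sigma^2 ,
\end{align*}
valid whenever $\gamma^s \le \frac{1}{L}$. The one subtlety is that $\gamma^s = N/s^{2/3}$ exceeds $\frac1L$ for the first $O((NL)^{3/2})$ epochs, where the step that discards $-\frac{\gamma^s-(\gamma^s)^2L}{2}\mathbb{E}[\Vert\frac1N\sum_i\nabla f_i(\mathbf{x}_i^{s,k-1})\Vert^2]$ is no longer justified. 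I would handle these finitely many (in $S$) epochs by bounding that now-positive term via $\Vert\frac1N\sum_i\nabla f_i(\mathbf{x}_i^{s,k-1})\Vert^2 \le G^2$, which adds only a fixed constant $C$, independent of $S$, to the telescoped inequality.

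Summing over all $(s,k)$, telescoping the objective values, and dividing by $\tfrac12\sum_{s}K^s\gamma^s$ then produces
\begin{align*}
\frac{\sum_{s,k}\gamma^s\,\mathbb{E}[\Vert\nabla f(\overline{\mathbf{x}}^{s,k-1})\Vert^2]}{\sum_{s,k}\gamma^s} &\le \frac{2(f(\overline{\mathbf{x}}^0)-f^\ast)+C}{\sum_s K^s\gamma^s} \\ &\quad + \frac{4G^2L^2\sum_s (K^s\gamma^s)^3 + \tfrac{L\sigma^2}{N}\sum_s K^s(\gamma^s)^2}{\sum_s K^s\gamma^s} .
\end{align*}
It remains to estimate the weight sum $\sum_s K^s\gamma^s$, the drift sum $\sum_s (K^s\gamma^s)^3$, and the variance sum $\sum_s K^s(\gamma^s)^2$ under $K^s=\lceil s^{1/3}/N\rceil$, $\gamma^s=N/s^{2/3}$, together with $T=\sum_s K^s$.

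The main obstacle is exactly this asymptotic evaluation, which splits into the regime $s\le N^3$ (where $\lceil s^{1/3}/N\rceil=1$) and $s>N^3$ (where $K^s\asymp s^{1/3}/N$, so $K^s\gamma^s\asymp s^{-1/3}$). In the large-$s$ regime the weight sum behaves like $\sum s^{-1/3}\asymp S^{2/3}$, while both the drift sum $\sum (s^{-1/3})^3=\sum s^{-1}$ and the variance sum $\tfrac{L\sigma^2}{N}\sum N s^{-1}$ behave like $\log S$; this harmonic sum is precisely where the logarithmic factor hidden in $\widetilde{O}(\cdot)$ appears. Since $T=\sum_s K^s\asymp S^{4/3}/N$, we have $\sqrt{NT}\asymp S^{2/3}$, so each of the three ratios is $O(\log S / S^{2/3})=\widetilde{O}(1/\sqrt{NT})$ after noting $\log S = O(\log(NT))$. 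The delicate points I expect to spend the most effort on are: controlling the ceiling function and the small-$s$ regime so that the $N$-dependent contributions there are genuinely lower order in $S$ (absorbed into the $S$-independent constant of $\widetilde{O}$, whose implied constant may depend on $N$); tracking the exact numerical constants through the two regimes; and making the correspondence $S\leftrightarrow T$ precise enough to convert the clean $S$-asymptotics into the stated $\widetilde{O}(1/\sqrt{NT})$ bound.
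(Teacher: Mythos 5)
Your proposal is correct and follows essentially the same route as the paper's proof: the same epoch-local deviation bound $4(\gamma^s K^s)^2G^2$, the same per-step smoothness descent inequality, the same device of absorbing the early epochs where $\gamma^s > \frac{1}{L}$ into an $S$-independent constant $C$, telescoping across epoch boundaries via $\overline{\mathbf{x}}^{s,0}=\overline{\mathbf{x}}^{s-1,K^{s-1}}$, and the same asymptotic evaluation in which the harmonic sums $\sum_s 1/s$ produce the hidden $\log(NT)$ factor while $\sum_s K^s\gamma^s=\Omega(S^{2/3})=\Omega(\sqrt{NT})$. If anything, your explicit bound of the discarded term by $G^2$ in the early epochs and your two-regime ($s\le N^3$ versus $s>N^3$) treatment of the ceiling $\lceil s^{1/3}/N\rceil$ are slightly more careful than the paper's corresponding steps.
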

\begin{proof}
See Supplement.
\end{proof}

\subsection{Asynchronous Implementations in Heterogeneous Networks}

Algorithm \ref{alg:parallel-sgd} requires all workers to compute the average of individual solutions every $I$ iterations and  synchronization among local workers are not needed before averaging. However, the fastest worker still needs to wait until all the other workers finish $I$ iterations of SGD even if it finishes its own $I$ iteration SGD much earlier. (See Figure \ref{fig:1} for a $2$ worker example where one worker is significantly faster than the other. Note that orange ``syn'' rectangles represent the procedures to compute the node average.) As a consequence, the computation capability of faster workers is wasted.  Such an issue can arise quite often in heterogeneous networks where nodes are equipped with different hardwares. 

\begin{figure}[h!] 
\centering
\includegraphics[width=0.48\textwidth]{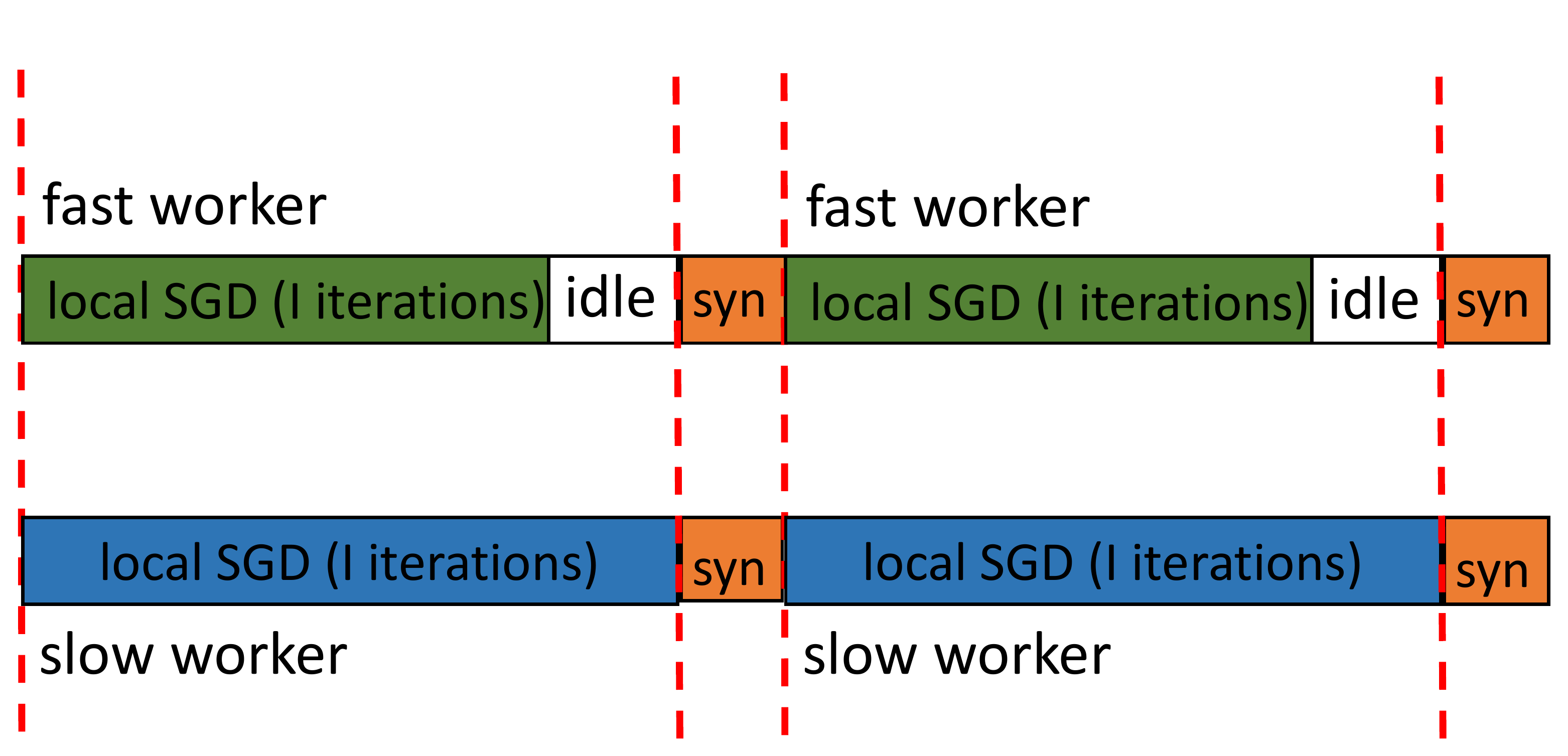}
\caption{An illustration of Algorithm \ref{alg:parallel-sgd} implemented in a $2$ worker heterogeneous network.   Orange ``syn'' rectangles represent the procedures to compute the node average.}\label{fig:1}
\end{figure}

Intuitively, if one worker finishes its $I$ iteration local SGD earlier, to avoid wasting its computation capability, we might want to let this worker continue running its local SGD until all the other workers finish their $I$ iteration local SGD. However, such a method can drag the node average too far towards the local solution at the fastest worker. Note that if $f_{i}(\cdot)$ in \eqref{eq:problem-form} are significantly different from each other such that the minimizer of $f_{i}(\cdot)$ at the $i$-th  worker, which is the fastest one, deviates the true minimizer of problem \eqref{eq:problem-form} too much, then dragging the node average towards the fastest worker's local solution is undesired.  In this subsection, we further assume that problem \eqref{eq:problem-form} satisfies the following assumption:

\begin{Assumption} \label{ass:identical-dist}
The distributions $\mathcal{D}_{i}$ in the definition of  each $f_{i}(\mathbf{x}) \overset{\Delta}{=}\mathbb{E}_{\zeta_{i}\sim \mathcal{D}_{i}} [ F_{i}(\mathbf{x}; \zeta_{i})]$ in \eqref{eq:problem-form} are identical.
\end{Assumption}

Note that Assumption \ref{ass:identical-dist} is satisfied if all local workers can access a common training data set or each local training data set is obtained from uniform sampling from the global training set.  Consider the restarted local SGD for heterogeneous networks described in Algorithm \ref{alg:parallel-sgd-heterogeneous}. Note that if $I_{i} \equiv I, \forall i$ for some fixed constant $I$, then Algorithm \ref{alg:parallel-sgd-heterogeneous} degrades to Algorithm \ref{alg:parallel-sgd}. 

\begin{algorithm}
\caption{PR-SGD in Heterogeneous Networks }\label{alg:parallel-sgd-heterogeneous}
\begin{algorithmic}[1]
\State {\bf Input:}  Set learning rate $\gamma > 0$ and epoch length of each worker $i$ as $I_{i}$.
\State {\bf Initialize:} Initialize $\mathbf{x}_i^{0,I_{i}}  = \overline{\mathbf{x}}^{0} \in \mathbb{R}^m$.
\For{epoch index $s=1$~\text{to}~$S$}
\State  Initialize $\mathbf{x}_{i}^{s,0} = \frac{1}{N}\sum_{i=1}^{N} \mathbf{x}_{i}^{s-1,I_{i}}$ as the node average of local worker solutions from the last epoch.
\State  Each worker $i$ in parallel runs its local SGD for $I_i$ iterations via:
\begin{align}
\mathbf{x}_{i}^{s,k} =\mathbf{x}_{i}^{s,k-1} - \gamma \mathbf{G}_{i}^{s,k},\quad \forall i \label{eq:parallel-sgd-heterogeneous-x-update}
\end{align}
where $\mathbf{G}_{i}^{s,k}$ is an unbiased stochastic gradient at point $\mathbf{x}_{i}^{s,k-1}$.
\EndFor
\end{algorithmic}
\end{algorithm}

 In practice, if the hardware configurations or measurements (from previous experiments) of each local worker are known, we can predetermine the value of each $I_{i}$, i.e., if worker $i$ is two times faster than worker $j$, then $I_{i} = 2 I_{j}$. Alternatively, under a more practical implementation, we can set a fixed time duration for each epoch and let each local worker keep running its local SGD until the given time elapses. By doing so, within the same time duration, the faster a worker is, the more SGD iterations it runs. In contrast, if we apply Algorithm \ref{alg:parallel-sgd} in this setting, then all local workers have to run the same number of SGD iterations as that can be run by the slowest worker within the given time interval.  This subsection shows that, under Assumption \ref{ass:identical-dist}, Algorithm \ref{alg:parallel-sgd-heterogeneous} can achieve a better performance than Algorithm \ref{alg:parallel-sgd} in heterogeneous networks where some workers are much faster than others.

Without loss of generality, this subsection always indexes local workers in a decreasing order of speed. That is, worker $1$ is the fastest while worker $N$ is the slowest. If we run Algorithm \ref{alg:parallel-sgd-heterogeneous} by specifying a fixed wall clock time duration for each epoch, during which each local worker keeps running its local SGD, then we have  $I_{1} \geq I_{2} \geq \cdots  \geq I_{N}$. Fix epoch index $s$, note that for $i\neq 1$, variables $\mathbf{x}_{i}^{s,k}$ with $k > I_{i}$ is never used. However, for the convenience of analysis, we define
\begin{align*}
\mathbf{x}_{i}^{s,k} \overset{\Delta}{=} \mathbf{x}_{i}^{s,k-1}, \forall i\neq 1, \forall k\in\{I_{i}+1, \ldots, I_{1}\}.
\end{align*}
Conceptually, the above equation can be interpreted as assuming worker $i$, which is slower than worker $1$, runs extra $I_{1} - I_{i}$ iterations of SGD by using $0$ as an imaginary stochastic gradient (with no computation cost).  See Figure \ref{fig:2} for a $2$ worker example where $I_{1} = 16$ and $I_{2} = 8$.  Using the definition $\overline{\mathbf{x}}^{s,k} \overset{\Delta}{=}  \frac{1}{N}\sum_{i=1}^{N} \mathbf{x}_{i}^{s,k}$, we have
{\small
\begin{align*}
\overline{\mathbf{x}}^{s,k} = \overline{\mathbf{x}}^{s,k-1} + \gamma \frac{1}{N} \sum_{i: I_{i} \geq k} \mathbf{G}_{i}^{s,k}, \forall s, \forall k\in\{1,2,\ldots, I_{1}\}.
\end{align*}
}

\begin{figure}[ht!] 
\centering
\includegraphics[width=0.48\textwidth]{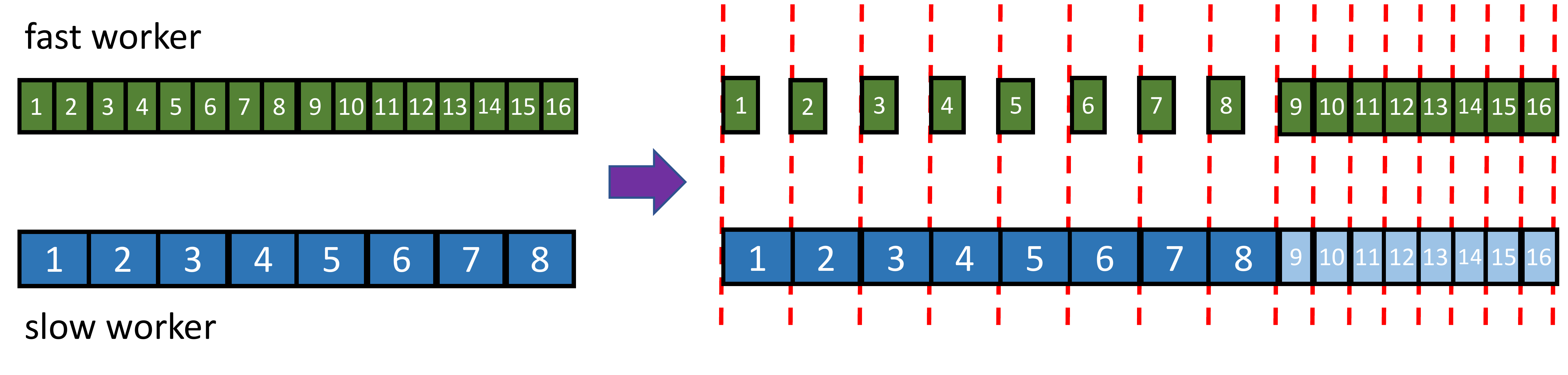}
\caption{{\bf Left}: A typical epoch of Algorithm \ref{alg:parallel-sgd-heterogeneous} in a heterogeneous network with $2$ workers. A wider rectangle means the SGD iteration takes a longer wall clock time.  {\bf Right}: Imagined extra SGD iterations with a $0$ stochastic gradient (in light blue rectangles) are added for the slow worker.}\label{fig:2}
\end{figure}

\begin{Thm} \label{thm:rate-heterogeneous} Consider problem \eqref{eq:problem-form} under Assumptions \ref{ass:basic} and \ref{ass:identical-dist}. Suppose all workers are indexed in a decreasing order of their speed, i.e., worker $1$ is the fastest and worker $N$ is the slowest. If $0 < \gamma \leq \frac{1}{L}$ in Algorithm \ref{alg:parallel-sgd-heterogeneous}, then for all $S\geq 1$, 
{\small
\begin{align}
&\frac{1}{S\frac{1}{N}\sum_{i=1}^{N} I_{i}}\sum_{s=1}^{S}\sum_{k=1}^{I_{1}}\frac{j_{k}}{N} \mathbb{E}[\Vert \nabla f(\overline{\mathbf{x}}^{s,k-1})\Vert^{2}] \nonumber \\
\leq& \frac{2}{\gamma S\frac{1}{N}\sum_{i=1}^{N} I_{i}} (f(\overline{\mathbf{x}}^{0}) -  f^{\ast})  + 4\gamma^{2} I_{1}^{2} G^{2}L^{2}+ \frac{L}{N} \gamma \sigma^{2} \label{eq:thm-rate-heterogeneous}
\end{align}
}%
where $j_{k}$ for each given $k$ is the largest integer in $\{1,2,\ldots, N\}$ such that $k \leq I_{j_{k}}$(That is, for each fixed $k$,  $j_{k}$ is the number of workers that are still using sampled true stochastic gradients to update their local solutions at iteration $k$.); and $f^{\ast}$ is the minimum value of problem \eqref{eq:problem-form}. 
\end{Thm}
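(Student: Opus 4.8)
The plan is to mirror the proof of Theorem \ref{thm:rate}, but to carry the per-iteration count $j_k$ of active workers through every step. First I would fix an epoch index $s$ and inner index $k$ and apply the $L$-smoothness descent inequality to $\overline{\mathbf{x}}^{s,k}$ and $\overline{\mathbf{x}}^{s,k-1}$, exactly as in \eqref{eq:pf-thm-rate-eq1}, producing an inner-product term and a second-moment term involving the increment $\overline{\mathbf{x}}^{s,k} - \overline{\mathbf{x}}^{s,k-1} = -\gamma\frac{1}{N}\sum_{i:I_i\geq k}\mathbf{G}_i^{s,k}$. The crucial structural difference from Theorem \ref{thm:rate} is that only $j_k$ workers contribute to this increment; here I would invoke Assumption \ref{ass:identical-dist}, which forces $\nabla f_i \equiv \nabla f$ for all $i$, so that each active $\mathbf{G}_i^{s,k}$ is an unbiased estimate of the \emph{same} gradient $\nabla f(\mathbf{x}_i^{s,k-1})$. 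Without identical objectives, dropping the slow workers at iterations $k > I_i$ would bias the descent direction, so this assumption is exactly what the argument needs.

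For the second-moment term I would use the bias--variance split and cross-term independence across the $j_k$ active workers (as in (b)--(d) of \eqref{eq:pf-thm-rate-eq2}) to obtain $\mathbb{E}[\|\overline{\mathbf{x}}^{s,k}-\overline{\mathbf{x}}^{s,k-1}\|^2] \le \gamma^2\frac{j_k}{N^2}\sigma^2 + \gamma^2\mathbb{E}[\|\frac{1}{N}\sum_{i:I_i\geq k}\nabla f(\mathbf{x}_i^{s,k-1})\|^2]$. For the inner product, after taking conditional expectations I would factor the partial sum as $\frac{1}{N}\sum_{i:I_i\geq k}\nabla f(\mathbf{x}_i^{s,k-1}) = \frac{j_k}{N}\tilde{\mathbf{b}}$, where $\tilde{\mathbf{b}}$ is the genuine average over active workers, and apply the polarization identity to $\langle\nabla f(\overline{\mathbf{x}}^{s,k-1}),\tilde{\mathbf{b}}\rangle$. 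This is what makes the weight $\frac{j_k}{N}$ appear in front of $\|\nabla f(\overline{\mathbf{x}}^{s,k-1})\|^2$, matching the left-hand side of \eqref{eq:thm-rate-heterogeneous}.

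The deviation term $\|\nabla f(\overline{\mathbf{x}}^{s,k-1})-\tilde{\mathbf{b}}\|^2$ I would bound by Jensen and $L$-smoothness in terms of $\frac{1}{j_k}\sum_{i:I_i\geq k}\mathbb{E}[\|\overline{\mathbf{x}}^{s,k-1}-\mathbf{x}_i^{s,k-1}\|^2]$, which calls for a heterogeneous analog of Lemma \ref{lm:diff-avg-per-node}: since every worker restarts each epoch from the common node average and the longest epoch has length $I_1$, the same telescoping argument, now absorbing the non-uniform active sets through a Jensen step, yields $\mathbb{E}[\|\overline{\mathbf{x}}^{s,k-1}-\mathbf{x}_i^{s,k-1}\|^2]\le 4\gamma^2 I_1^2 G^2$. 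Substituting everything back, the $\|\tilde{\mathbf{b}}\|^2$ contributions combine into $-\frac{\gamma j_k}{2N}(1-\frac{L\gamma j_k}{N})\|\tilde{\mathbf{b}}\|^2$; since $j_k\le N$, the hypothesis $\gamma\le 1/L$ guarantees the coefficient is nonnegative, so this term can be discarded exactly as in step (b) of \eqref{eq:pf-thm-rate-eq7}.

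Finally I would rearrange into the per-step bound, sum over $k=1,\dots,I_1$ and $s=1,\dots,S$, and telescope the objective values; this is valid across epoch boundaries because frozen workers do not move, so $\overline{\mathbf{x}}^{s,I_1}=\overline{\mathbf{x}}^{s+1,0}$ and the sum collapses to $f(\overline{\mathbf{x}}^0)-f^\ast$. The remaining ingredient is the counting identity $\sum_{k=1}^{I_1}j_k=\sum_{i=1}^N I_i$, obtained by exchanging the order of summation, which converts the accumulated noise and deviation terms into the advertised $\frac{1}{N}\sum_i I_i$ normalization; dividing through by $\frac{\gamma}{2}S\frac{1}{N}\sum_i I_i$ then reproduces \eqref{eq:thm-rate-heterogeneous}. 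I expect the main obstacle to be the partial-averaging bookkeeping: getting the $\frac{j_k}{N}$ weight to emerge cleanly from the polarization step, and confirming that the plain condition $\gamma\le 1/L$, rather than a $j_k$-dependent one, still suffices to kill the $\|\tilde{\mathbf{b}}\|^2$ term for every active set.
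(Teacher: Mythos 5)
Your proposal is correct and follows essentially the same route as the paper's own proof: the smoothness descent step with the $j_k$-worker increment, the bias--variance split, the polarization identity that produces the $\frac{j_k}{N}$ weight, the use of Assumption \ref{ass:identical-dist} to identify $\nabla f$ with the average gradient over the active workers, a heterogeneous analog of Lemma \ref{lm:diff-avg-per-node} giving the $4\gamma^2 I_1^2 G^2$ deviation bound, discarding the quadratic term via $\gamma \le \frac{1}{L} \le \frac{N}{j_k L}$, and the final telescoping combined with the counting identity $\sum_{k=1}^{I_1} j_k = \sum_{i=1}^N I_i$. Every one of these steps appears, in the same order and with the same role, in the paper's supplement proof of Theorem \ref{thm:rate-heterogeneous}.
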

\begin{proof}
See Supplement.
\end{proof}

The next corollary shows that Algorithm \ref{alg:parallel-sgd-heterogeneous} in heterogeneous networks can ensure the convergence and preserve the same $O(1/\sqrt{NT})$ convergence rate with the same $O(\frac{T^{1/4}}{N^{3/4}})$ communication reduction.
 
\begin{Cor}\label{cor:rate-heterogeneous}
Consider problem \eqref{eq:problem-form} under Assumptions \ref{ass:basic} and \ref{ass:identical-dist}. Let $T\geq N$. If we use $\gamma = \Theta(\frac{\sqrt{N}}{\sqrt{T}})$ such that $\gamma \leq \frac{1}{L}$, $I_i = \Theta\big(\frac{T^{1/4}}{N^{3/4}}\big), \forall i$ and $S = \frac{T}{I_N}$ in Algorithm \ref{alg:parallel-sgd-heterogeneous}, then 
\begin{align*}
\frac{1}{S\frac{1}{N}\sum_{i=1}^{N} I_{i}}\sum_{s=1}^{S}\sum_{k=1}^{I_{1}}\frac{j_{k}}{N} \mathbb{E}\left[\Vert \nabla f(\overline{\mathbf{x}}^{s,k-1})\Vert^{2}\right ] \leq O(\frac{1}{\sqrt{NT}}) 
\end{align*}
where $j_{k}$ for each given $k$ is the largest integer in $\{1,2,\ldots, N\}$ such that $k \leq I_{j_{k}}$. 
\end{Cor}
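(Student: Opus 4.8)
The plan is to prove Corollary \ref{cor:rate-heterogeneous} as a direct specialization of Theorem \ref{thm:rate-heterogeneous}: I would substitute the prescribed choices $\gamma = \Theta(\sqrt{N}/\sqrt{T})$, $I_i = \Theta(T^{1/4}/N^{3/4})$ for all $i$, and $S = T/I_N$ into the bound \eqref{eq:thm-rate-heterogeneous} and verify that each of the three terms on its right-hand side is $O(1/\sqrt{NT})$. The left-hand side of the corollary is identical to that of the theorem, so no further manipulation of the weighted gradient-norm average is needed; the entire argument is asymptotic bookkeeping on the right-hand side, valid once the hypothesis $\gamma \leq \frac{1}{L}$ ensures the precondition of Theorem \ref{thm:rate-heterogeneous} is met.

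The key preliminary step, which I regard as the crux of the argument, is to pin down the normalization factor $S \cdot \frac{1}{N}\sum_{i=1}^{N} I_{i}$ that appears both as the leading weight and inside the first right-hand-side term. Since every $I_i$ is taken to be $\Theta(T^{1/4}/N^{3/4})$, the average epoch length satisfies $\frac{1}{N}\sum_{i=1}^{N} I_{i} = \Theta(T^{1/4}/N^{3/4}) = \Theta(I_N)$. Combined with $S = T/I_N$, this yields $S \cdot \frac{1}{N}\sum_{i=1}^{N} I_{i} = \Theta(S I_N) = \Theta(T)$. Intuitively $T = S I_N$ is the number of SGD steps performed by the slowest worker, and because all workers run epochs of the same order, the total effective work per worker is also $\Theta(T)$; this identity is exactly what makes the first term collapse to the right order.

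With this in hand I would bound the three terms. For the first term $\frac{2}{\gamma \cdot \Theta(T)}(f(\overline{\mathbf{x}}^{0}) - f^{\ast})$, the denominator is $\Theta((\sqrt{N}/\sqrt{T}) \cdot T) = \Theta(\sqrt{NT})$, so the term is $O(1/\sqrt{NT})$. For the second term $4\gamma^{2} I_1^{2} G^{2} L^{2}$, I would use $\gamma^{2} = \Theta(N/T)$ together with $I_1^{2} = \Theta(\sqrt{T}/N^{3/2})$ to obtain $\gamma^{2} I_1^{2} = \Theta(1/\sqrt{NT})$. For the third term $\frac{L}{N}\gamma \sigma^{2} = \Theta((\sqrt{N}/\sqrt{T})/N) = \Theta(1/\sqrt{NT})$. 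Summing the three $O(1/\sqrt{NT})$ contributions gives the claimed rate.

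There is no genuine technical obstacle here beyond Theorem \ref{thm:rate-heterogeneous} itself, since the corollary is a parameter-tuning statement. The only two points requiring care are (i) correctly identifying the normalization factor as $\Theta(T)$ rather than handling $S$ and the $\{I_i\}$ separately, and (ii) observing that the hypothesis explicitly imposes $\gamma \leq \frac{1}{L}$, which is consistent with $\gamma = \Theta(\sqrt{N}/\sqrt{T})$ once $T$ is large enough relative to $N L^{2}$, so that the bound \eqref{eq:thm-rate-heterogeneous} is applicable throughout.
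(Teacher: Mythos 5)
Your proposal is correct and matches the paper's own argument exactly: the paper proves Corollary \ref{cor:rate-heterogeneous} by simply substituting $\gamma$, $I_i$, and $S$ into \eqref{eq:thm-rate-heterogeneous}, which is precisely what you do. Your added bookkeeping (the normalization $S\cdot\frac{1}{N}\sum_{i=1}^{N}I_i=\Theta(T)$ and the term-by-term $O(1/\sqrt{NT})$ checks) is a valid, more explicit rendering of the same one-line substitution.
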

\begin{proof}
This simply follows by substituting values of $\gamma, I_i, S$ into \eqref{eq:thm-rate-heterogeneous} in Theorem \ref{thm:rate-heterogeneous}.
\end{proof}

\begin{Rem}
Note that once $I_{i}$ values are known, then $j_{k}$ for any $k$ in Theorem \ref{thm:rate-heterogeneous} and Corollary \ref{cor:rate-heterogeneous} are also available by its definition. To appreciate the implication of Theorem \ref{thm:rate-time-varying}, we recall that Algorithm \ref{alg:parallel-sgd} can be interpreted as a special case of Algorithm \ref{alg:parallel-sgd-heterogeneous} with $I_{i} \equiv I_{N}, \forall i$, i.e., all workers can only run the same number (determined by the slowest worker) of SGD iterations  in each epoch. In this perspective, Theorem \ref{thm:rate} (with $I = I_N$) implies that the performance of Algorithm \ref{alg:parallel-sgd} is given by 
{\small
\begin{align}
&\frac{1}{S I_N}\sum_{s=1}^{S}\sum_{k=1}^{I_{N}} \mathbb{E}[\Vert \nabla f(\overline{\mathbf{x}}^{s,k-1})\Vert^{2}] \nonumber \\\leq& \frac{2}{\gamma S I_N} (f(\overline{\mathbf{x}}^{0}) -  f^{\ast} )  + 4\gamma^{2} I_{N}^{2} G^{2}L^{2}+ \frac{L}{N} \gamma \sigma^{2} \label{eq:syn-rate-remark}
\end{align}
}%
Note that the left sides of \eqref{eq:thm-rate-heterogeneous} and \eqref{eq:syn-rate-remark} (weighted average expressions) can be attained by taking each $\overline{\mathbf{x}}^{s,k-1}$ randomly with a probability equal to the normalized weight in the summation.  The first error term in \eqref{eq:thm-rate-heterogeneous} is strictly smaller than that in \eqref{eq:syn-rate-remark} while the second error term in \eqref{eq:thm-rate-heterogeneous} is larger than that in \eqref{eq:syn-rate-remark}. Note that the constant factor $f(\overline{\mathbf{x}}^{0}) -  f^{\ast} $ in the first error term in \eqref{eq:syn-rate-remark} is large when a poor initial point $\overline{\mathbf{x}}^0$ is chosen (and dominates the second error term if $f(\overline{\mathbf{x}}^0) - f^\ast) \geq 2 \gamma^3 I_N^3 G^2 L^2 S$).  So the main message of Theorem \ref{thm:rate-heterogeneous} is that if a poor initial point is selected,  Algorithm \ref{alg:parallel-sgd-time-varying} can possibly converges faster than Algorithm \ref{alg:parallel-sgd} (at least for the first few epochs) in a heterogeneous network. 
\end{Rem}

\section{Experiment}
The superior training speed-up performance of model averaging has been  empirically observed in various deep learning scenarios, e.g., CNN for MNIST in \cite{Zhang16ArXiv}\cite{Kamp18ArXiv}\cite{McMahan17AISTATS}; VGG for CIFAR10 in \cite{Zhou17ArXiv}; DNN-GMM for speech recognition in \cite{Chen16ICASSP} \cite{Su18ArXiv}; and LSTM for language modeling in \cite{McMahan17AISTATS}. A thorough empirical study of ResNet over CIFAR and ImageNet is also available in the recent work \cite{Lin18ArXiv}. In Figures \ref{fig:3} and \ref{fig:4}, we compare model averaging, i.e., PR-SGD (Algorithm \ref{alg:parallel-sgd}) with $I\in\{4, 8, 16, 32\}$) with the classical parallel mini-batch SGD\footnote{The classical parallel mini-batch SGD is equivalent to Algorithm \ref{alg:parallel-sgd} with $I=1$. Our implementation with Horovod uses the more efficient ``all-reduce``method  rather than the ``parameter server" method to synchronize information between workers.} by training ResNet20 with CIFAR10 on a machine with $8$ P100 GPUs. Our implementation uses Horovod \cite{Sergeev18Horovod} for inter-worker communication and uses PyTorch 0.4 for algorithm implementations. 

\begin{figure}[ht!] 
	\centering
	\includegraphics[width=0.44\textwidth]{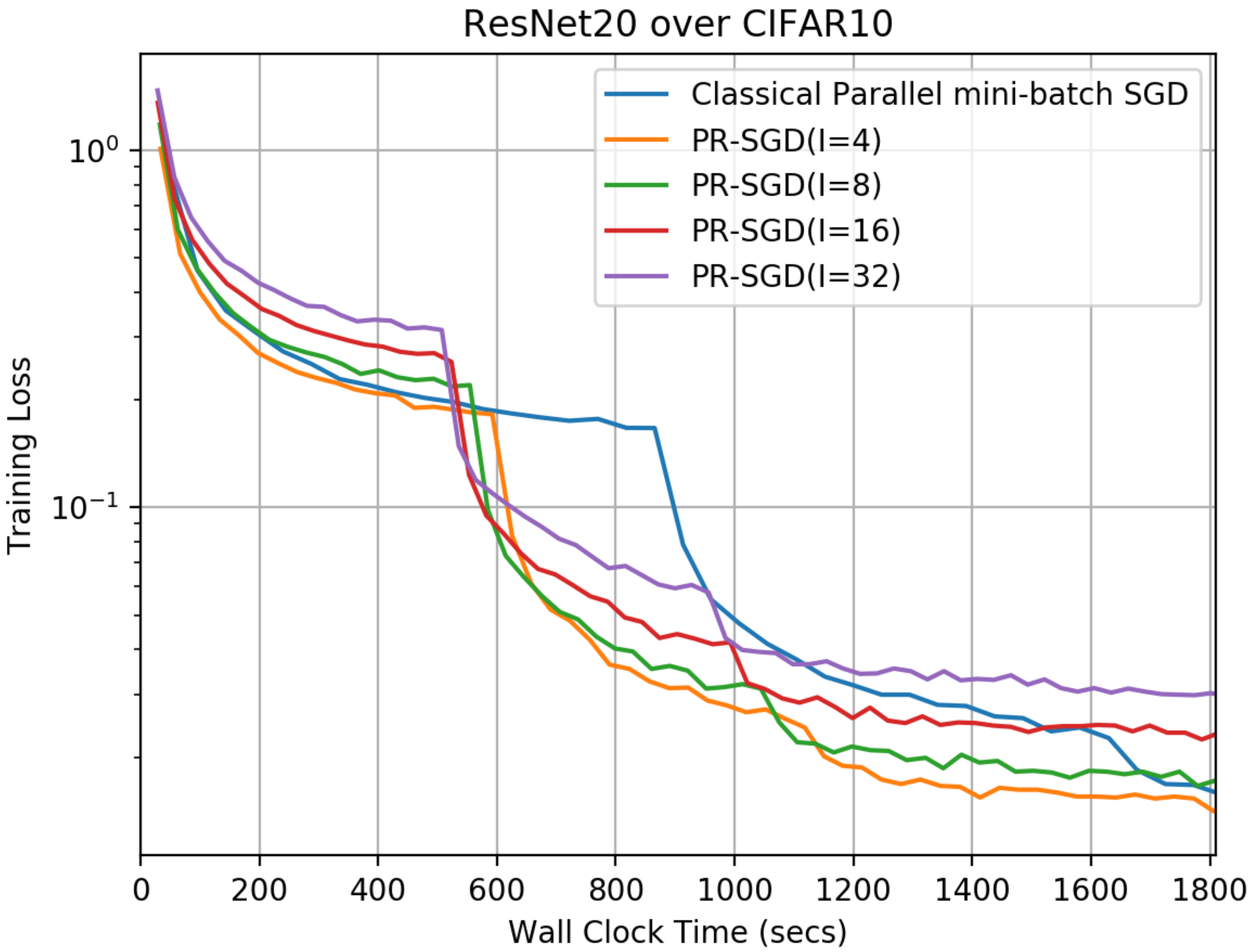}
	\caption{Training loss of ResNet20 over CIFAR10 on a machine with $8$ P100 GPUs. In all schemes, each worker uses a local batch size $32$ and momentum $0.9$. The initial learning at each worker is $0.1$ and is divided by $10$ when $8$ workers together access $150$ epochs and $275$ epochs of training data.}\label{fig:3}
\end{figure}

\begin{figure}[ht!] 
	\centering
	\includegraphics[width=0.44\textwidth]{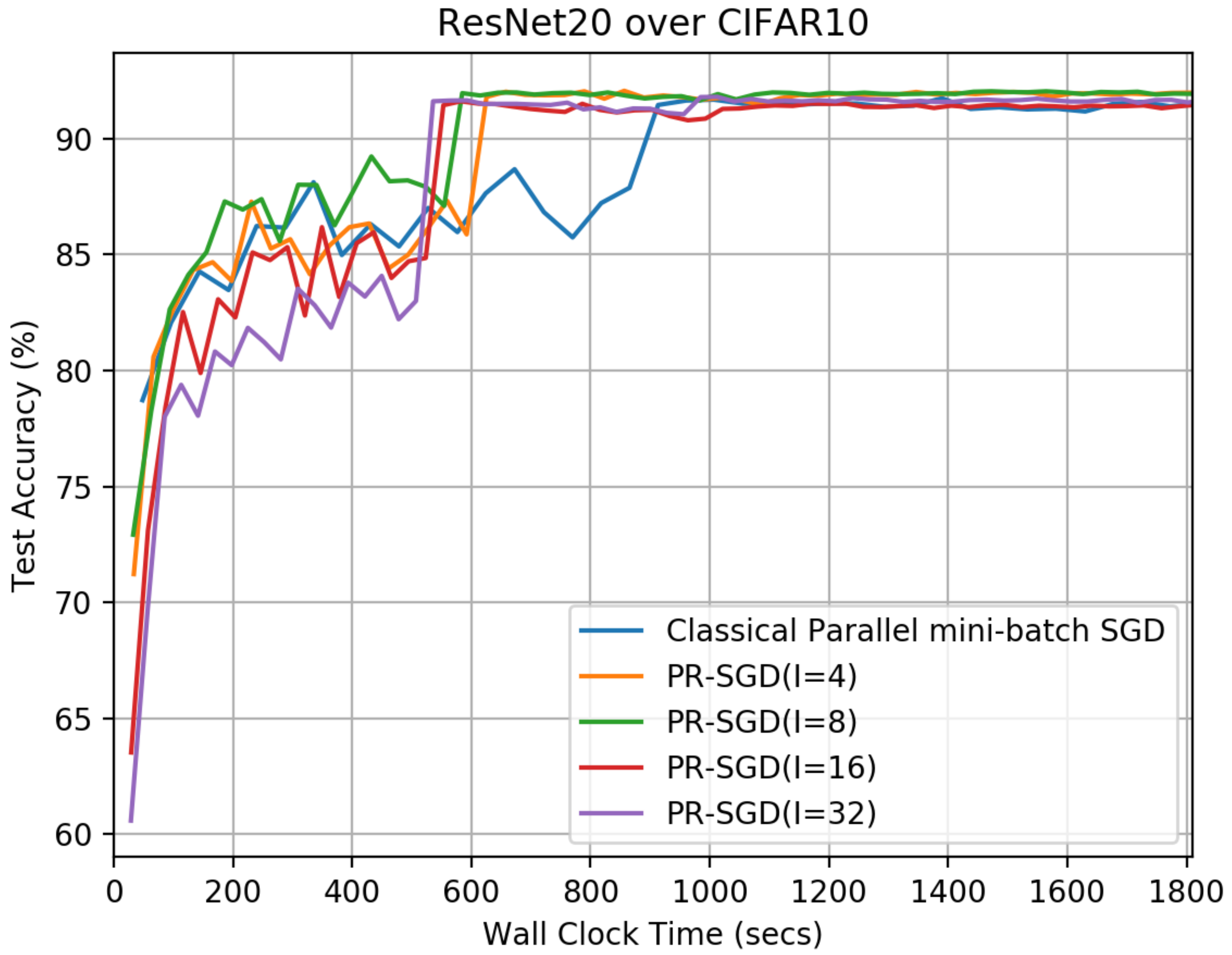}
	\caption{Test accuracy of ResNet20 over CIFAR10 on a machine with $8$ P100 GPUs. In all schemes, each worker uses a local batch size $32$ and  momentum $0.9$. The initial learning at each worker is $0.1$ and is divided by $10$ when $8$ workers together access $150$ epochs and $275$ epochs of training data.}\label{fig:4}
\end{figure}

\section{Conclusion}
This paper studies parallel restarted SGD, which is a theoretical abstraction of the  ``model averaging" practice widely used in training deep neural networks. This paper shows that parallel restarted SGD can achieve $O(1/\sqrt{NT})$ convergence for non-convex optimization with a number of communication rounds reduced by a factor $O(T^{1/4}$) compared with that required by the classical parallel mini-batch SGD.

\bibliography{mybibfile}
\bibliographystyle{aaai}
 
\newpage

\setcounter{page}{1}
\section{Supplement}

\subsection{Proof of Theorem \ref{thm:rate-time-varying}} \label{app:time-varying}

The next lemma extends Lemma \ref{lm:diff-avg-per-node}.
\begin{Lem}
Under Assumption \ref{ass:basic}, Algorithm \ref{alg:parallel-sgd-time-varying} ensures
\begin{align*}
\mathbb{E} \Vert \overline{\mathbf{x}}^{s,k} - \mathbf{x}_{i}^{s,k}\Vert^{2} \leq 4(\gamma^{s} K^{s})^{2} G^{2}, \forall i, \forall s, \forall k\in\{1,2,\ldots, K^{s}\}
\end{align*}
where $G$ is the constant defined in Assumption \ref{ass:basic}.
\end{Lem}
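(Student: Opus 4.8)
The plan is to mimic the proof of Lemma~\ref{lm:diff-avg-per-node} almost verbatim, exploiting the fact that Algorithm~\ref{alg:parallel-sgd-time-varying} keeps the learning rate $\gamma^s$ fixed throughout epoch $s$ and restarts every worker from the common point $\overline{\mathbf{x}}^{s,0}$ at the start of that epoch. First I would fix an epoch index $s$, a worker $i$, and an inner index $k\in\{1,\ldots,K^s\}$, and unroll the local update \eqref{eq:parallel-sgd-time-varying-x-update} from the restart point $\mathbf{x}_i^{s,0}=\overline{\mathbf{x}}^{s,0}$ to obtain
\begin{align*}
\mathbf{x}_i^{s,k} = \overline{\mathbf{x}}^{s,0} - \gamma^s \sum_{\kappa=1}^{k} \mathbf{G}_i^{s,\kappa}.
\end{align*}
Averaging this identity over the $N$ workers (equivalently, unrolling the recursion for $\overline{\mathbf{x}}^{s,k}$ directly) gives the companion expression $\overline{\mathbf{x}}^{s,k} = \overline{\mathbf{x}}^{s,0} - \gamma^s \sum_{\kappa=1}^{k} \frac{1}{N}\sum_{j=1}^{N} \mathbf{G}_j^{s,\kappa}$, so that the common initial point cancels in the difference.

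Subtracting the two expressions, the gap $\mathbf{x}_i^{s,k}-\overline{\mathbf{x}}^{s,k}$ is precisely $\gamma^s$ times the difference between a single worker's cumulative stochastic gradient and its cross-worker average. I would then reuse the identical chain of inequalities from Lemma~\ref{lm:diff-avg-per-node}: apply $\Vert \sum_i \mathbf{z}_i\Vert^2 \le n\sum_i\Vert \mathbf{z}_i\Vert^2$ three times --- once with $n=2$ to split the two sums, once with $n=k$ to pull the summation over $\kappa$ outside the square, and once with $n=N$ on the inner average --- and finally invoke the bounded second moment $\mathbb{E}\Vert \mathbf{G}_i^{s,\kappa}\Vert^2\le G^2$ from Assumption~\ref{ass:basic}. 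This produces the bound $4(\gamma^s)^2 k^2 G^2$, which I upper bound by $4(\gamma^s K^s)^2 G^2$ using $k\le K^s$.

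There is essentially no hard part here: the only point to verify is that no two different learning rates enter a single epoch's unrolling, which holds because restarts and learning-rate changes both occur exactly at epoch boundaries. This is precisely why the time-varying rate does not complicate the per-node deviation bound --- within each epoch the analysis is that of constant-step PR-SGD with $I=K^s$ and $\gamma=\gamma^s$, so Lemma~\ref{lm:diff-avg-per-node} applies under the substitution $I\mapsto K^s$, $\gamma\mapsto\gamma^s$.
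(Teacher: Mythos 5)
Your proposal is correct and is exactly the argument the paper intends: the paper's own proof simply states that the lemma ``trivially extends Lemma~\ref{lm:diff-avg-per-node} by restricting attention to each particular epoch $s$,'' and your unrolling from the common restart point $\overline{\mathbf{x}}^{s,0}$ with the constant within-epoch rate $\gamma^s$, followed by the same three applications of $\Vert\sum_i \mathbf{z}_i\Vert^2\le n\sum_i\Vert\mathbf{z}_i\Vert^2$ and the bound $k\le K^s$, is precisely that extension made explicit.
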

\begin{proof}
This lemma trivially extends Lemma \ref{lm:diff-avg-per-node} by restricting our attentions to each particular epoch $s$.
\end{proof}

{\bf Main Proof of Theorem \ref{thm:rate-time-varying}:} Fix $s$ and $k$. Following the lines in the proof of Theorem \ref{thm:rate} until \eqref{eq:pf-thm-rate-eq6} (by replacing $\overline{\mathbf{x}}^{t}, \gamma$ with $\overline{\mathbf{x}}^{s,k}, \gamma^{s}$, respectively), we have
{\small
\begin{align*}
&\mathbb{E}\left[f(\overline{\mathbf{x}}^{s,k})\right]  \\
\leq &\mathbb{E}\left[f(\overline{\mathbf{x}}^{s,k-1})\right]  - \frac{\gamma^{s} - (\gamma^{s})^{2}L}{2} \mathbb{E}[\Vert \frac{1}{N} \sum_{i=1}^{N} \nabla f_{i} (\mathbf{x}_{i}^{s,k-1})\Vert^{2}] \nonumber \\ &- \frac{\gamma^{s}}{2} \mathbb{E} [\Vert \nabla f(\overline{\mathbf{x}}^{s,k-1})\Vert^{2}] + 2(\gamma^{s})^{3} (K^{s})^{2} G^{2} L^2 + \frac{L}{2N} (\gamma^{s})^{2} \sigma^{2}
\end{align*}
}
Rearranging terms yields
\begin{align*}
&\frac{\gamma^{s}}{2} \mathbb{E} [\Vert \nabla f(\overline{\mathbf{x}}^{s,k-1})\Vert^{2}] \\
\leq &- \frac{\gamma^{s} - (\gamma^{s})^{2}L}{2} \mathbb{E}[\Vert \frac{1}{N} \sum_{i=1}^{N} \nabla f_{i} (\mathbf{x}_{i}^{s,k-1})\Vert^{2}]  \\&+ \mathbb{E}\left[f(\overline{\mathbf{x}}^{s,k-1})\right] - \mathbb{E}[f(\overline{\mathbf{x}}^{s,k})] \\&+ 2(\gamma^{s})^{3} (K^{s})^{2} G^{2} L^2 + \frac{L}{2N} (\gamma^{s})^{2} \sigma^{2}
\end{align*}

Note that when $s$ is sufficiently large, $\gamma^{s} = \frac{N}{s^{2/3}}$ is eventually between $(0,\frac{1}{L}]$. That is, for large $s$, e.g., $s > \lceil (NL)^{3/2} \rceil$, we have 
\begin{align}
&\frac{\gamma^{s}}{2} \mathbb{E}[\Vert \nabla f(\overline{\mathbf{x}}^{s,k-1})\Vert^{2}] \nonumber\\
\leq & \mathbb{E}\left[f(\overline{\mathbf{x}}^{s,k-1})\right] - \mathbb{E}\left[f(\overline{\mathbf{x}}^{s,k})\right] + 2(\gamma^{s})^{3} (K^{s})^{2} G^{2} L^2 \nonumber\\ &+ \frac{L}{2N} (\gamma^{s})^{2} \sigma^{2} \label{eq:pf-thm-time-varying-eq1}
\end{align}

Recall that  $f(\cdot)$ has stochastic gradients with bounded second order moments by Assumption \ref{ass:basic}. There exists a constant $C$ such that for $s \leq  \lceil (NL)^{3/2} \rceil$, we have
\begin{align}
&\frac{\gamma^{s}}{2} \mathbb{E}[\Vert \nabla f(\overline{\mathbf{x}}^{s,k-1})\Vert^{2}] \nonumber\\
\leq & \frac{C}{\sum_{s=1}^{\lceil (NL)^{3/2} \rceil} K^{s}} + \mathbb{E}[f(\overline{\mathbf{x}}^{s,k-1})] - \mathbb{E}[f(\overline{\mathbf{x}}^{s,k})] \nonumber \\ &+ 2(\gamma^{s})^{3} (K^{s})^{2} G^{2} L^2 + \frac{L}{2N} (\gamma^{s})^{2} \sigma^{2} \label{eq:pf-thm-time-varying-eq2}
\end{align}

Summing \eqref{eq:pf-thm-time-varying-eq2} over $s\in\{1,\ldots, \lceil (NL)^{3/2} \rceil \}, k\in\{1,2,\ldots, K^{s}\}$ and \eqref{eq:pf-thm-time-varying-eq1} over $s\in\{\lceil (NL)^{3/2} \rceil+1,\ldots, S \}, k\in\{1,2,\ldots, K^{s}\}$ (noting that $\overline{\mathbf{x}}^{s,T_{s}} = \overline{\mathbf{x}}^{s+1,0},\forall s$ and $\overline{\mathbf{x}}^{1,0} = \overline{\mathbf{x}}^0$) yields 
{\small
\begin{align}
&\sum_{s=1}^{S} \sum_{k=1}^{K^{s}} \frac{\gamma^{s}}{2}\mathbb{E}\left [\Vert \nabla f(\overline{\mathbf{x}}^{s,k-1})\Vert^{2}\right]  \nonumber \\
\leq & \mathbb{E}[f(\overline{\mathbf{x}}^{1,0})] - \mathbb{E}[f(\overline{\mathbf{x}}^{S,K^{S}})] + C + 2G^{2}L^{2} \sum_{s=1}^{S} (K^{s})^{2} \sum_{k=1}^{K^{s}} (\gamma^{s})^{3} \nonumber \\ &+ \frac{L\sigma^{2}}{2N} \sum_{s=1}^{S} \sum_{k=1}^{K^{s}} (\gamma^{s})^{2}  \nonumber \\
\overset{(a)}{\leq} & f(\overline{\mathbf{x}}^{0}) - f^{\ast} + C + 2G^{2}L^{2} \sum_{s=1}^{S} (K^{s})^{2} \sum_{k=1}^{K^{s}} (\gamma^{s})^{3} \nonumber \\ &+ \frac{L\sigma^{2}}{2N} \sum_{s=1}^{S} \sum_{k=1}^{K^{s}} (\gamma^{s})^{2} \label{eq:pf-thm-time-varying-eq3}
\end{align}
}%
where (a) follows because $f^{\ast}$ is the minimum value of problem \eqref{eq:problem-form}.

Note that there exists constant $c_{1} > 0$ such that  
\begin{align}
\sum_{s=1}^{S} (K^{s})^{2} \sum_{k=1}^{K^{s}} (\gamma^{s})^{3} =& \sum_{s=1}^{S} (\lceil \frac{s^{1/3}}{N} \rceil )^{3} (\frac{N}{s^{2/3}})^3 \nonumber \\
\leq & c_{1} \sum_{s=1}^{S} \frac{1}{s} \nonumber \\
=& O(\log(S)) \nonumber \\
\overset{(a)}{=}& O(\log(NT)) \label{eq:pf-thm-time-varying-eq4}
\end{align}
and there exits constant $c_{2} > 0$ such that 
\begin{align}
\sum_{s=1}^{S} \sum_{k=1}^{K^{s}} (\gamma^{s})^{2} =& \sum_{s=1}^{S} K^{s} (\gamma^{s})^{2} \nonumber \\
=& \sum_{s=1}^{S} \lceil \frac{s^{1/3}}{N} \rceil (\frac{N}{s^{2/3}})^2 \nonumber \\
\leq & c_{2} N \sum_{s=1}^{S} \frac{1}{s} \nonumber \\
= &O(N \log(S)) \nonumber \\
\overset{(a)}{=}& O(N\log(NT)) \label{eq:pf-thm-time-varying-eq5}
\end{align}
and there exits constant $c_{3} > 0$ such that
\begin{align}
\sum_{s=1}^{S} \sum_{k=1}^{K^{s}} \gamma^{s}=& \sum_{s=1}^{S} K^{s} \gamma^{s} \nonumber \\
=& \sum_{s=1}^{S} \lceil \frac{s^{1/3}}{N} \rceil  \frac{N}{s^{2/3}} \nonumber \\
\geq & c_{3}  \sum_{s=1}^{S}  \frac{s^{1/3}}{N}   \frac{N}{s^{2/3}} \nonumber \\
\geq & \Omega( S^{2/3}) \nonumber \\
\overset{(a)}{=} & \Omega(\sqrt{NT}) \label{eq:pf-thm-time-varying-eq6}
\end{align}
where (a) in the above three equations  \eqref{eq:pf-thm-time-varying-eq4}-\eqref{eq:pf-thm-time-varying-eq6} follows by noting that if $\sum_{s=1}^{S} K^{s} = \sum_{s=1}^{S} \lceil \frac{s^{1/3}}{N} \rceil = T$, then $S = \Theta( (NT)^{3/4} )$

Dividing both sides of \eqref{eq:pf-thm-time-varying-eq3} by $\sum_{s=1}^{S} \sum_{k=1}^{K^{s}} \frac{\gamma^{s}}{2}$ and substituting \eqref{eq:pf-thm-time-varying-eq4}-\eqref{eq:pf-thm-time-varying-eq6} into it yields
{\small
	\begin{align*}
\frac{1}{\sum_{s=1}^{S}\sum_{k=1}^{K^{s}}{\gamma^{s}}}  \sum_{s=1}^{S} \sum_{k=1}^{K^{s}} \mathbb{E}[ \gamma^s \Vert \nabla f(\overline{\mathbf{x}}^{s, k-1})\Vert^{2}] \leq O(\frac{\log(NT)}{\sqrt{NT}})
\end{align*}
}

\subsection{Proof of Theorem \ref{thm:rate-heterogeneous}} \label{app:rate-heterogeneous}

\begin{Lem} \label{lm:diff-avg-per-node-heterogeneous}
Under Assumption \ref{ass:basic}, Algorithm \ref{alg:parallel-sgd-heterogeneous} ensures
\begin{align*}
\mathbb{E} \Vert \overline{\mathbf{x}}^{s,k} - \mathbf{x}_{i}^{s,k}\Vert^{2} \leq 4\gamma^{2} k^{2}G^{2} , \forall i, \forall s, \forall k\in\{1,2,\ldots, I_{i}\} 
\end{align*}
where $G$ is the constant defined in Assumption \ref{ass:basic}.
\end{Lem}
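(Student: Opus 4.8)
The plan is to mirror the proof of Lemma \ref{lm:diff-avg-per-node}, adapting it to the \emph{partial} averaging that arises because only $j_k \leq N$ workers are active at iteration $k$ (recall that since $I_1 \geq I_2 \geq \cdots \geq I_N$, the set $\{j : I_j \geq k\}$ is exactly $\{1,\dots,j_k\}$, of cardinality $j_k$). First I would fix an epoch index $s$, a worker $i$, and an iteration $k \in \{1,\dots,I_i\}$. Since $k \leq I_i$, worker $i$ has been running genuine local SGD from the common epoch starting point $\overline{\mathbf{x}}^{s,0}$, so unrolling \eqref{eq:parallel-sgd-heterogeneous-x-update} gives
$$\mathbf{x}_i^{s,k} = \overline{\mathbf{x}}^{s,0} - \gamma \sum_{\tau=1}^{k} \mathbf{G}_i^{s,\tau},$$
while unrolling the node-average recursion (with the active-worker partial sum $\tfrac{1}{N}\sum_{j: I_j \geq \tau}$) from the same starting point gives
$$\overline{\mathbf{x}}^{s,k} = \overline{\mathbf{x}}^{s,0} - \gamma \sum_{\tau=1}^{k} \frac{1}{N}\sum_{j: I_j \geq \tau} \mathbf{G}_j^{s,\tau}.$$
Subtracting cancels $\overline{\mathbf{x}}^{s,0}$, leaving a difference of two accumulated gradient sums, exactly as in \eqref{eq:pf-diff-avg-per-node-eq1}.

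Next I would apply the same chain of elementary inequalities used in Lemma \ref{lm:diff-avg-per-node}. Expanding $\mathbb{E}\Vert \mathbf{x}_i^{s,k} - \overline{\mathbf{x}}^{s,k}\Vert^2$, I would split the two gradient sums with $\Vert \mathbf{a}-\mathbf{b}\Vert^2 \leq 2\Vert\mathbf{a}\Vert^2 + 2\Vert\mathbf{b}\Vert^2$ and then apply $\Vert \sum_{\tau=1}^{k}\mathbf{z}_\tau\Vert^2 \leq k\sum_{\tau=1}^{k}\Vert\mathbf{z}_\tau\Vert^2$ to each sum over the $k$ iterations. This reduces the bound to controlling two per-iteration quantities: $\mathbb{E}\Vert\mathbf{G}_i^{s,\tau}\Vert^2$, which is at most $G^2$ directly by Assumption \ref{ass:basic}, and the partial-average term $\mathbb{E}\Vert\tfrac{1}{N}\sum_{j: I_j \geq \tau} \mathbf{G}_j^{s,\tau}\Vert^2$.

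The one genuinely new step — the place that departs from the homogeneous Lemma \ref{lm:diff-avg-per-node} — is bounding this partial average, where only $j_\tau \leq N$ workers contribute yet the normalization is still $1/N$. Writing it as $\tfrac{1}{N^2}\Vert\sum_{j: I_j \geq \tau} \mathbf{G}_j^{s,\tau}\Vert^2$ and applying the same inequality with $n = j_\tau$ active terms gives $\tfrac{j_\tau}{N^2}\sum_{j: I_j \geq \tau}\Vert\mathbf{G}_j^{s,\tau}\Vert^2$; taking expectations and invoking the bounded second moment yields $\tfrac{j_\tau^2}{N^2} G^2 \leq G^2$, since $j_\tau \leq N$. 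The key observation is that the mismatch between the count $j_\tau$ of active workers and the fixed normalization $N$ is harmless: the factor $j_\tau^2/N^2$ is at most one, so the per-iteration bound is exactly $G^2$, the same as in the fully synchronous case. Summing the two per-iteration bounds over $\tau \in \{1,\dots,k\}$ and collecting constants gives $2\gamma^2 k (kG^2 + kG^2) = 4\gamma^2 k^2 G^2$, as claimed. I expect no real obstacle; the only subtlety to verify is that worker $i$'s accumulation indeed runs over all of $\tau = 1,\dots,k$ with genuine gradients, which holds precisely because $k \leq I_i$.
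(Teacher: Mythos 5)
Your proposal is correct and follows essentially the same route as the paper's own proof: unroll both $\mathbf{x}_i^{s,k}$ and $\overline{\mathbf{x}}^{s,k}$ from the common epoch starting point, split the difference of accumulated gradient sums via $\Vert \mathbf{a}-\mathbf{b}\Vert^2 \leq 2\Vert\mathbf{a}\Vert^2+2\Vert\mathbf{b}\Vert^2$ and $\Vert\sum_{\tau=1}^k \mathbf{z}_\tau\Vert^2 \leq k\sum_{\tau=1}^k\Vert\mathbf{z}_\tau\Vert^2$, and bound the partial-average term by $G^2$ using the fact that it has at most $N$ terms. Your explicit $j_\tau^2/N^2 \leq 1$ computation is just a slightly more detailed rendering of the paper's remark that the summation $\sum_{l:\tau\leq I_l}\mathbf{G}_l^{s,\tau}$ contains fewer than $N$ terms.
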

\begin{proof}
Fix $i, s$ and $k$. Let $\overline{\mathbf{y}} = \frac{1}{N}\sum_{i=1}^{N} \mathbf{x}_{i}^{s-1, I_{i}}$, which is the common initial point of epoch $s$. Note that
{\small 
\begin{align*}
\mathbf{x}_{i}^{s,k} = \overline{\mathbf{y}} - \gamma \sum_{\tau=1}^{k} \mathbf{G}_{i}^{s,\tau},
\end{align*}
}%
and
{\small 
\begin{align*}
\overline{\mathbf{x}}^{s,k} = \overline{\mathbf{y}} - \gamma \sum_{\tau=1}^{k} \frac{1}{N} \sum_{l: \tau \leq I_{l}} \mathbf{G}_{l}^{s,\tau}.
\end{align*}
}

Thus, we have
{\small
\begin{align*}
&\mathbb{E}\Vert \overline{\mathbf{x}}^{s,k} - \mathbf{x}_{i}^{s,k}\Vert^{2} \\
= & \gamma^{2} \mathbb{E} \Vert \sum_{\tau=1}^{k} \mathbf{G}_{i}^{s,\tau}-\sum_{\tau=1}^{k} \frac{1}{N} \sum_{l: \tau \leq I_{l}} \mathbf{G}_{l}^{s,\tau} \Vert^{2} \\
\overset{(a)}{\leq}&  2 k \gamma^{2} \Big(\sum_{\tau=1}^{k} \mathbb{E}\Vert \mathbf{G}_{i}^{s,\tau}\Vert^{2} + \sum_{\tau=1}^{k} \mathbb{E}\Vert \frac{1}{N} \sum_{l: \tau \leq I_{l}} \mathbf{G}_{l}^{s,\tau} \Vert^{2}\Big) \\
\overset{(b)}{\leq} &2k \gamma^{2} \Big( kG^{2} + kG^{2} \Big)
\end{align*}
}
where (a) follows by using the inequality $\Vert \sum_{i=1}^{n} \mathbf{z}_{i}\Vert^{2} \leq n \sum_{i=1}^{n} \Vert \mathbf{z}_{i}\Vert^{2}$ for any vectors $\mathbf{z}_{i}$ and any positive integer $n$; and (b) follows by using the same inequality (noting that there are less than $N$ terms in the summation $\sum_{l: \tau \leq I_{l}} \mathbf{G}_{l}^{s,\tau}$) and applying Assumption \ref{ass:basic}.
\end{proof}

{\bf Main Proof of Theorem \ref{thm:rate-heterogeneous}:}  Fix $s$ and $k\in\{1,2,\ldots,I_{1}\}$. Recall that $j_{k}$ is the largest integer in $\{1,2,\ldots, N\}$ such that $k \leq I_{j_{k}}$. That is, at iteration $k$ in epoch $s$, only the first $j_{k}$ workers are using true stochastic gradients to update and the other workers stop updating (since they are too slow and do not have the chance to perform the $k$-th update in this epoch).  Note that as $k$ increases to $I_{1}$, $j_{k}$ decreases to $1$. By the definition of $j_{k}$, we have 
\begin{align*}
\overline{\mathbf{x}}^{s,k} = \overline{\mathbf{x}}^{s,k-1} + \gamma \frac{1}{N} \sum_{i=1}^{j_{k}} \mathbf{G}_{i}^{s,k}.
\end{align*}

By the smoothness of $f$ , we have 
{\small
\begin{align}
\mathbb{E}[f(\overline{\mathbf{x}}^{s,k})] \leq&  \mathbb{E}[f(\overline{\mathbf{x}}^{s,k-1})] +  \mathbb{E}[\langle \nabla f(\overline{\mathbf{x}}^{s,k-1}), \overline{\mathbf{x}}^{s,k} - \overline{\mathbf{x}}^{s,k-1}\rangle] \nonumber \\&+ \frac{L}{2} \mathbb{E}[\Vert \overline{\mathbf{x}}^{s,k} - \overline{\mathbf{x}}^{s,k-1}\Vert^{2}]  \label{eq:pf-thm-heterogeneous-eq1}
\end{align}
}
Similarly to \eqref{eq:pf-thm-rate-eq2}, we can show
{\small
\begin{align}
 &\mathbb{E}[\Vert \overline{\mathbf{x}}^{s,k} - \overline{\mathbf{x}}^{s,k-1}\Vert^{2}] \nonumber\\
 \leq & \frac{j_{k}}{N^{2}} \gamma^{2} \sigma^{2}  +  \frac{\gamma^{2} j_{k}^{2}}{N^{2}} \mathbb{E} [ \Vert  \frac{1}{j_{k}}\sum_{i=1}^{j_{k}} \nabla f_{i}(\mathbf{x}_{i}^{s,k-1})\Vert^{2}]   \label{eq:pf-thm-heterogeneous-eq2}
\end{align}
}
We further have
{\small
\begin{align}
&\mathbb{E}[\langle \nabla f(\overline{\mathbf{x}}^{s,k-1}), \overline{\mathbf{x}}^{s,k} - \overline{\mathbf{x}}^{s,k-1}\rangle] \nonumber\\
\overset{(a)}{=}& -\frac{\gamma j_{k}}{N} \mathbb{E} [\langle \nabla f(\overline{\mathbf{x}}^{s,k-1}),  \frac{1}{j_{k}}\sum_{i=1}^{j_{k}} \mathbf{G}_{i}^{s,k}\rangle ] \nonumber \\
\overset{(b)}{=}& -\frac{\gamma j_{k}}{N} \mathbb{E} [\langle \nabla f(\overline{\mathbf{x}}^{s,k-1}),  \frac{1}{j_{k}} \sum_{i=1}^{j_{k}} \nabla f_{i} (\mathbf{x}_{i}^{s,k-1})\rangle ] \nonumber \\
\overset{(c)}{=}& - \frac{\gamma j_{k} }{2N} \mathbb{E} \Big[ \Vert \nabla f(\overline{\mathbf{x}}^{s,k-1})\Vert^{2} + \Vert \frac{1}{j_{k}} \sum_{i=1}^{j_{k}} \nabla f_{i} (\mathbf{x}_{i}^{s,k-1})\Vert^{2} \nonumber \\ &\qquad\qquad - \Vert \nabla f(\overline{\mathbf{x}}^{s,k-1}) - \frac{1}{j_{k}} \sum_{i=1}^{j_{k}} \nabla f_{i} (\mathbf{x}_{i}^{s,k-1})\Vert^{2}\Big]   \label{eq:pf-thm-heterogeneous-eq3}
\end{align}
}%
where (a)-(c) follows by using the same arguments used in showing \eqref{eq:pf-thm-rate-eq3}.

Substituting \eqref{eq:pf-thm-heterogeneous-eq2} and \eqref{eq:pf-thm-heterogeneous-eq3} into \eqref{eq:pf-thm-heterogeneous-eq1} yields
{\small
\begin{align}
&\mathbb{E}\left[f(\overline{\mathbf{x}}^{s,k})\right] \nonumber \\
\leq &\mathbb{E}\left[f(\overline{\mathbf{x}}^{s,k-1})\right] - \frac{\gamma j_{k}}{2N} (1 - \frac{j_{k} L \gamma}{N}) \mathbb{E}[\Vert \frac{1}{j_{k}} \sum_{i=1}^{j_{k}} \nabla f_{i} (\mathbf{x}_{i}^{s,k-1})\Vert^{2}] \nonumber \\ &- \frac{\gamma j_{k}}{2N} \mathbb{E}[\Vert \nabla f(\overline{\mathbf{x}}^{s,k-1})\Vert^{2}]  \nonumber\\ &+ \frac{\gamma j_{k}}{2N} \mathbb{E}[
\Vert \nabla f(\overline{\mathbf{x}}^{s,k-1}) - \frac{1}{j_{k}} \sum_{i=1}^{j_{k}} \nabla f_{i} (\mathbf{x}_{i}^{s,k-1}) \Vert^{2} ] + \frac{j_{k}L}{2N^{2}} \gamma^{2} \sigma^{2} \nonumber \\
\overset{(a)}{\leq} & \mathbb{E}[f(\overline{\mathbf{x}}^{s,k-1})]  - \frac{\gamma j_{k}}{2N} \mathbb{E}[\Vert \nabla f(\overline{\mathbf{x}}^{s,k-1})\Vert^{2} ]  \nonumber\\ &+ \frac{\gamma j_{k}}{2N} \mathbb{E}[
\Vert \nabla f(\overline{\mathbf{x}}^{s,k-1}) - \frac{1}{j_{k}} \sum_{i=1}^{j_{k}} \nabla f_{i} (\mathbf{x}_{i}^{s,k-1}) \Vert^{2}] + \frac{j_{k}L}{2N^{2}} \gamma^{2} \sigma^{2}   \label{eq:pf-thm-heterogeneous-eq4}
\end{align}
}
where (a) follows because $0 < \gamma \leq \frac{1}{L} \leq \frac{N}{j_{k} L}$ (noting that $j_{k} \leq N, \forall k$).

Note that
{\small 
\begin{align}
&\mathbb{E}[\Vert \nabla f(\overline{\mathbf{x}}^{s,k-1}) - \frac{1}{j_{k}} \sum_{i=1}^{j_{k}} \nabla f_{i} (\mathbf{x}_{i}^{s,k-1})\Vert^{2}] \nonumber \\
\overset{(a)}{=}& \mathbb{E}[\Vert \frac{1}{j_{k}} \sum_{i=1}^{j_{k}}\nabla f_{i}(\overline{\mathbf{x}}^{s,k-1}) - \frac{1}{j_{k}} \sum_{i=1}^{j_{k}} \nabla f_{i} (\mathbf{x}_{i}^{s,k-1})\Vert^{2}] \nonumber \\
=& \frac{1}{j_{k}^{2}} \mathbb{E}[\Vert \sum_{i=1}^{j_{k}} \big( \nabla f_{i}(\overline{\mathbf{x}}^{s,k-1}) - \nabla f_{i} (\mathbf{x}_{i}^{s,k-1}) \big)\Vert^{2}] \nonumber \\
\overset{(b)}{\leq}& \frac{1}{j_{k}} \mathbb{E} [ \sum_{i=1}^{j_{k}} \Vert \nabla f_{i}(\overline{\mathbf{x}}^{s,k-1}) - \nabla f_{i} (\mathbf{x}_{i}^{s,k-1})\Vert ^{2}] \nonumber 
\end{align}
\begin{align}
\overset{(c)}{\leq}& L^2\frac{1}{j_{k}} \sum_{i=1}^{j_{k}}\mathbb{E}[ \Vert \overline{\mathbf{x}}^{s,k-1} - \mathbf{x}_{i}^{s,k-1}\Vert^{2}] \nonumber \\
\overset{(d)}{\leq}&  4\gamma^{2} I_1^{2} G^{2} L^2 \label{eq:pf-thm-heterogeneous-eq5}
\end{align}
}%
where (a) follows because $f_{i}(\cdot)$ are identical by Assumption \ref{ass:identical-dist}; 
(b) follows by using the inequality $\Vert \sum_{i=1}^{n} \mathbf{z}_{i}\Vert^{2} \leq n \sum_{i=1}^{n} \Vert \mathbf{z}_{i}\Vert^{2}$ for any vectors $\mathbf{z}_{i}$ and any integer $n$; (c) follows from the smoothness of each $f_{i}$ by Assumption \ref{ass:basic}; and (d) follows from Lemma \ref{lm:diff-avg-per-node-heterogeneous}.

Substituting \eqref{eq:pf-thm-heterogeneous-eq5} into \eqref{eq:pf-thm-heterogeneous-eq4} yields
{\small
\begin{align}
\mathbb{E}\left[f(\overline{\mathbf{x}}^{s,k})\right] \leq &\mathbb{E}\left[f(\overline{\mathbf{x}}^{s,k-1})\right]  - \frac{\gamma j_{k}}{2N} \mathbb{E}\left[\Vert \nabla f(\overline{\mathbf{x}}^{s,k-1})\Vert^{2}\right ] \nonumber\\& + \frac{2\gamma^{3} j_{k} I_{1}^{2} G^{2}L^{2}}{N}  + \frac{j_{k}L}{2N^{2}} \gamma^{2} \sigma^{2}  
\end{align}
}

Dividing both sides by $\frac{\gamma}{2}$ and rearranging terms yields
{\small
\begin{align}
&\frac{j_{k}}{N} \mathbb{E}\left[\Vert \nabla f(\overline{\mathbf{x}}^{s,k-1})\Vert^{2}\right ] \nonumber\\
 \leq &\frac{2}{\gamma} \big( \mathbb{E}\left[f(\overline{\mathbf{x}}^{s,k-1})\right]  -  \mathbb{E}[f(\overline{\mathbf{x}}^{s,k})]  \big) + \frac{4j_{k}}{N} \gamma^{2} I_{1}^{2} G^{2}L^{2}+ \frac{j_{k}}{N}\frac{L}{N} \gamma \sigma^{2}  \label{eq:pf-thm-heterogeneous-eq6}
\end{align}
}%

As a sanity check of our analysis, we note that if $I_{1} = I_{2} = \cdots = I_{N}$, then $j_{k} = N, \forall k$ and \eqref{eq:pf-thm-heterogeneous-eq6} reduces to \eqref{eq:pf-thm-rate-eq7}. (Recall that Algorithm \ref{alg:parallel-sgd-time-varying} is identical to Algorithm \ref{alg:parallel-sgd} in this case.)

Summing \eqref{eq:pf-thm-heterogeneous-eq6} over $k\in\{1,\ldots, I_{1}\}$, $s\in\{1,2,\ldots, S\}$ (noting that $\overline{\mathbf{x}}^{s,I_{1}} = \overline{\mathbf{x}}^{s+1,0}, \forall s$ and $\overline{\mathbf{x}}^{1,0} = \overline{\mathbf{x}}^{0}$) and dividing both sides by $\sum_{s=1}^{S} \sum_{k=1}^{I_{1}} \frac{j_{k}}{N} = S \frac{1}{N}\sum_{i=1}^{N} I_{i}$ yields
{\small
\begin{align}
&\frac{1}{S\frac{1}{N}\sum_{i=1}^{N} I_{i}}\sum_{s=1}^{S}\sum_{k=1}^{I_{1}}\frac{j_{k}}{N} \mathbb{E}[\Vert \nabla f(\overline{\mathbf{x}}^{s,k-1})\Vert^{2}] \nonumber \\ \leq &\frac{2}{\gamma S\frac{1}{N}\sum_{i=1}^{N} I_{i}} \big( \mathbb{E}[f(\overline{\mathbf{x}}^{0})]  -  \mathbb{E}[f(\overline{\mathbf{x}}^{S,I_{1}})]  \big)  + 4\gamma^{2} I_{1}^{2} G^{2}L^{2}+ \frac{L}{N} \gamma \sigma^{2} \nonumber \\
\overset{(a)}{\leq}&\frac{2}{\gamma S\frac{1}{N}\sum_{i=1}^{N} I_{i}} \Big(f(\overline{\mathbf{x}}^{0}) -  f^{\ast} \Big)  + 4\gamma^{2} I_{1}^{2} G^{2}L^{2}+ \frac{L}{N} \gamma \sigma^{2} \label{eq:pf-thm-heterogeneous-eq7}
\end{align}
}%
where (a) follows because $f^{\ast}$ is the minimum value of problem \eqref{eq:problem-form}.

\end{document}